\documentclass[11pt]{amsart}

\usepackage[dvipsnames]{xcolor}
\usepackage{tikz}
\usetikzlibrary{positioning}
\usepackage{amsmath}
\usepackage{amssymb}
\usepackage[T1]{fontenc}
\usepackage[utf8]{inputenc}
\usepackage{mathtools}
\usepackage{amsfonts}
\usepackage{fancyhdr}
\usepackage{comment}
\usepackage{mathrsfs}
\usepackage{verbatim}
\usetikzlibrary{arrows}

\begin{document}

\title{A categorical reconstruction of crystals and quantum groups at $q=0$}
\author{Craig Smith}

\maketitle
\pagestyle{plain}

\begin{abstract}
The quantum co-ordinate algebra $A_{q}(\mathfrak{g})$ associated to a Kac-Moody Lie algebra $\mathfrak{g}$ forms a Hopf algebra whose comodules are precisely the $U_{q}(\mathfrak{g})$ modules in the BGG category $\mathcal{O}_{\mathfrak{g}}$. In this paper we investigate whether an analogous result is true when $q=0$. We classify crystal bases as coalgebras over a comonadic functor on the category of pointed sets and encode the monoidal structure of crystals into a bicomonadic structure. In doing this we prove that there is no coalgebra in the category of pointed sets whose comodules are equivalent to crystal bases. We then construct a bialgebra over $\mathbb{Z}$ whose based comodules are equivalent to crystals, which we conjecture is linked to Lusztig's quantum group at $v = \infty$.
\end{abstract}

\tableofcontents

\newtheorem{theorem}{Theorem}[section]
\newtheorem{corollary}[theorem]{Corollary}
\newtheorem{example}[theorem]{Example}
\newtheorem{lem}[theorem]{Lemma}
\newtheorem{obs}[theorem]{Observation}
\newtheorem{ass}[theorem]{Assumptions}
\newtheorem{prop}[theorem]{Proposition}
\theoremstyle{definition}
\newtheorem{defn}[theorem]{Definition}

\newtheorem{rem}[theorem]{Remark}
\numberwithin{equation}{section}

\newenvironment{definition}[1][Definition]{\begin{trivlist}
\item\hskip \labelsep {\bfseries #1}]}{\end{trivlist}}

\newenvironment{remark}[1][Remark]{\begin{trivlist}
\item[\hskip \labelsep {\bfseries #1}]}{\end{trivlist}}

\setcounter{section}{-1}

\section{Introduction}

The quantum co-ordinate algebra $A_{q}(\mathfrak{g})$ associated to a Kac-Moody Lie algebra $\mathfrak{g}$ forms a Hopf algebra whose comodules are precisely the $U_{q}(\mathfrak{g})$ modules in the BGG category $\mathcal{O}_{\mathfrak{g}}$. In this paper we investigate whether a similar result is true when $q=0$.\\

In Section 1 we recall some definitions and basic results about quantum groups and crystals.  This is followed by the construction of an algebra structure on the crystal base $\mathcal{B}$ associated to $A_{q}(\mathfrak{g})$ in Section 2, as well as a discussion of why a lack of rigidity in pointed sets means that we cannot immediately adapt the comultiplication on $A_{q}(\mathfrak{g})$ to the setting of crystals.\\

In Section 3 we take a more categorical approach. Using the Barr-Beck Theorem we classify crystal bases as coalgebras over a comonad $U$ on the category of pointed sets. This is done in as broad generality as possible before being applied to the category of crystals. We can then encode the monoidal structure of the category of crystals into a bicomonadic structure on $U$. In doing this we prove that there is no coalgebra in the category of pointed sets whose comodules are equivalent to crystal bases.\\

In Section 4 we endow the free abelian group $\mathbb{B}$ on the crystal base $\mathcal{B}$ of $A_{q}(\mathfrak{g})$ with the structure of a $\mathbb{Z}$-bialgebra analogous to that of $A_{q}(\mathfrak{g})$. We then prove that its based comodules are equivalent to crystals. In the case of $\mathfrak{sl}_{2}$ we give an explicit presentation of this bialgebra and compare this to a presentation of $A_{q}(\mathfrak{sl}_{2})$. This leads us to conjecture a relationship between this bialgebra and Lusztig's quantum group at $v = \infty$ in \cite{L3}.\\

\subsection*{Funding}
This work was supported by the Engineering and Physical Sciences Research Council [EP/M50659X/1].

\subsection*{Acknowledgements}
\thanks{The author would like to thank Kobi Kremnitzer for his expert supervision and continued support throughout this research, without which writing this paper would not have been possible. He would also like to thank Andr\'e Henriques and Kevin McGerty for their invaluable insights and advice.}

\section{Quantum groups and crystals}

\subsection{Quantum groups}

We begin by setting some notation and recalling some preliminary results. The following constructions of quantum groups can be seen in Kashiwara's paper \cite{K3} and in Jantzen's book \cite[p.~51]{J}, or for a more detailed account, see Lusztig's book \cite{L1}.
\\

\begin{defn}
Let $\mathfrak{g}$ be the Lie algebra defined by the data of
\begin{itemize}
\item[i)] a \emph{weight lattice} $\Phi$, a free $\mathbb{Z}$-module, with \emph{simple roots} $\alpha_{i} \in \Phi$, indexed by $i$ in some indexing set $I$, that form a basis of the \emph{root lattice} $\Psi$ (with respect to some Cartan subalgebra) contained in $\Phi$;
\item[ii)] a symmetric bilinear form $( \cdot , \cdot ) : \Phi \times \Phi \rightarrow \mathbb{Q}$ such that $( \alpha_{i}, \alpha_{i}) \in 2 \mathbb{N}$ and $( \alpha_{i}, \alpha_{j}) \leq 0$ for $i, j \in I, i \neq j$; and
\item[iii)] \emph{simple coroots} $\lambda_{i} \in \Phi^{\ast}=\text{Hom}_{\mathbb{Z}}(\Phi, \mathbb{Z})$ such that $\lambda_{i}(\alpha) = \frac{2(\alpha_{i}, \alpha)}{(\alpha_{i}, \alpha_{i})}$ for $i \in I, \alpha \in \Phi$.
\end{itemize}
Then $\mathfrak{g}$ is generated by elements $e_{i}, f_{i}, h_{i}$ for $i \in I$ subject to the relations
$$\begin{array}{cccc}
[h_{i},h_{j}] = 0, & [e_{i},f_{i}] = \delta_{ij} h_{i}, &
[h_{i},e_{j}] = \lambda_{i}(\alpha_{j})e_{j}, & [h_{i},f_{j}] = - \lambda_{i}(\alpha_{j})f_{j} \end{array},$$
and for $i \neq j$,
$$\begin{array}{cc}
(\text{ad}e_{i})^{1-\lambda_{i}(\alpha_{j})}e_{j} = 0, & (\text{ad}f_{i})^{1-\lambda_{i}(\alpha_{j})}f_{j} = 0, \end{array}$$
where ad is the \emph{adjoint map} $(\text{ad}x)(y) = [x,y]$.
\end{defn}

\begin{defn}
We will denote by
$$\Psi_{+}=\left\lbrace \sum_{i \in I} n_{i}\alpha_{i} \mid n_{i} \geq 0 \right\rbrace \subset \Psi$$
the \emph{positive roots}, and $\Psi_{-}=-\Psi_{+}$ the \emph{negative roots}. Let
$$\Phi_{+} = \{\alpha \in \Phi \mid \lambda_{i}(\alpha) \geq 0 \text{ for all } i \in I\}$$
be the \emph{dominant weights}. Then $\Phi$ has a partial ordering given by $\alpha \geq \beta$ if and only if $\alpha-\beta \in \Phi_{+}$.
\end{defn}

\begin{defn}
Take $q$ to be a nonzero element of our base field $k$ which is not a root of unity.  We may then define the \emph{quantised enveloping algebra} $U_{q}(\mathfrak{g})$ to be the algebra generated over our field $k$ by $e_{i}, f_{i}, q^{\lambda}$ for $i \in I, \lambda \in \Phi^{\ast}$, with the defining relations
$$\begin{array}{rl}
\text{for $\lambda=0$} & q^{\lambda}=1, \\
\text{for $\lambda_{1},\lambda_{2} \in \Phi^{\ast}$} & q^{\lambda_{1}}q^{\lambda_{2}}=q^{\lambda_{1} + \lambda_{2}},\\
\text{for $i \in I, \lambda \in \Phi^{\ast}$} & q^{\lambda} e_{i} q^{-\lambda} = q^{\lambda(\alpha_{i})}e_{i},\\
& q^{\lambda} f_{i} q^{-\lambda} = q^{- \lambda(\alpha_{i})}f_{i}, \\
& e_{i}f_{i} - f_{i}e_{i} = \delta_{ij} \frac{t_{i} - t_{i}^{-1}}{q_{i}-q_{i}^{-1}}\\
\text{for $i \neq j$} & \sum_{k=0}^{1-\lambda_{i}(\alpha_{j})} (-1)^{k} e_{i}^{(k)} e_{j} e_{i}^{(1-\lambda_{i}(\alpha_{j})-k)}\\
&= \sum_{k=0}^{1-\lambda_{i}(\alpha_{j})} (-1)^{k} f_{i}^{(k)} f_{j} f_{i}^{(1-\lambda_{i}(\alpha_{j})-k)} = 0
\end{array}$$
where $q_{i}=q^{\frac{(\alpha_{i},\alpha_{i})}{2}}$, $t_{i} = q^{\frac{(\alpha_{i}, \alpha_{i})}{2} \lambda_{i}}$, $[k]_{i}=\frac{q_{i}^{k} - q_{i}^{-k}}{q_{i} - q_{i}^{-1}}$, $[k]_{i}! = [1]_{i}[2]_{i}...[k]_{i}$, ${f_{i}^{(k)}=f_{i}^{k}/[k]_{i}!}$, and $e_{i}^{(k)}=e_{i}^{k}/[k]_{i}!$. Let us denote by $U_{q}(\mathfrak{n})$ (respectively $U_{q}(\mathfrak{n}^{-})$) the subalgebra of $U_{q}(\mathfrak{g})$ generated by $\{e_{i} \mid i \in I\}$ (respectively $\{f_{i} \mid i \in I\}$). Similarly let $U_{q}(\mathfrak{b})$ (respectively $U_{q}(\mathfrak{b}^{-})$) be the subalgebra of $U_{q}(\mathfrak{g})$ generated by $\{e_{i} \mid i \in I\} \cup \{q^{\lambda} \mid \lambda \in \Phi^{\ast}\}$ (respectively $\{f_{i} \mid i \in I\} \cup \{q^{\lambda} \mid \lambda \in \Phi^{\ast}\}$).
\end{defn}

\begin{example} If $\mathfrak{g}=\mathfrak{sl_2}$, $U_{q} ( \mathfrak{sl_2} )$ is the algebra generated by $e,f,t,t^{-1}$ with defining relations
$$tet^{-1} = q^{2}e, \, \, \, tft^{-1} = q^{-2}f, \, \, \, ef-fe = \frac{t-t^{-1}}{q-q^{-1}}.$$
For general $\mathfrak{g}$, the subalgebras of $U_{q}(\mathfrak{g})$ generated by $e_{i},f_{i},t_{i},t_{i}^{-1}$, denoted $U_{q}(\mathfrak{g})_{i}$, are isomorphic to $U_{q}(\mathfrak{sl_2})$.
\end{example}

\begin{defn}
We say that a left $U_{q}(\mathfrak{g})$ module $M$ is \emph{integrable} if $M$ decomposes into \emph{weight spaces} $M=\bigoplus_{\alpha \in \Phi}M_{\alpha}$,
$$M_{\alpha} = \{m \in M \mid q^{\lambda}m = q^{\lambda(\alpha)}m \text{ for all } \lambda \in \Phi^{\ast}\},$$
and for each $i \in I$, $M$ is a locally finite dimensional $U_{q}(\mathfrak{g})_{i}$ module. We then define $\mathcal{O}_{\mathfrak{g}}$ to be the category of integrable left $U_{q}(\mathfrak{g})$ modules that are locally finite dimensional as $U_{q}(\mathfrak{n})$ modules. Likewise we define integrable right $U_{q}(\mathfrak{g})$ modules, and an analogous category $\mathcal{O}_{\mathfrak{g}^{\text{op}}}$.
\end{defn}

\begin{prop}[Lusztig, \cite{L1}]
\label{CategoryOCompletelyReducible}
Representations in $\mathcal{O}_{\mathfrak{g}}$ are completely reducible, and all irreducible objects are, up to isomorphism, indexed by $\alpha \in \Phi^{+}$. These irreducibles, denoted $V(\alpha)$, can be expressed explicitly as the representation generated by a single vector $u_{\alpha}$, called the \emph{highest weight vector}, with the defining relations
$$e_{i}u_{\alpha} = 0= f_{i}^{1+\lambda_{i}(\alpha)}u_{\alpha}, \quad q^{\lambda}u_{\alpha}=q^{\lambda(\alpha)}u_{\alpha}, \quad \text{ for }i \in I, \lambda \in \Phi.$$
\end{prop}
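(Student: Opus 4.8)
The proposition is a quantum analogue of the classical highest-weight theory (the "theorem of the highest weight" plus complete reducibility, à la Weyl), so the plan is to mirror the classical proofs, importing the representation theory of the rank-one case at each step. The whole argument rests on the observation in the Remark that each $U_q(\mathfrak{g})_i \cong U_q(\mathfrak{sl}_2)$, so the starting point is to establish the $\mathfrak{sl}_2$ case explicitly and then bootstrap.

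So I would proceed as follows. First I would settle the rank-one statement: classify the finite-dimensional integrable $U_q(\mathfrak{sl}_2)$-modules directly. Using the relation $ef - fe = (t - t^{-1})/(q - q^{-1})$ together with the commutation of $t$ past $e,f$, one computes the action of $ef$ and $fe$ on a weight vector and shows that in a finite-dimensional module there must be a vector killed by $e$; the string of $f$-images of such a vector then spans an irreducible of each dimension, indexed by its highest weight, exactly as in the classical case. This gives the building block $V(\alpha)$ for $\mathfrak{sl}_2$ and the fact that every integrable $U_q(\mathfrak{sl}_2)$-module is a sum of these.

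Next, for general $\mathfrak{g}$, I would \emph{construct} $V(\alpha)$ for a dominant $\alpha \in \Phi^+$ as the cyclic module generated by a highest weight vector $u_\alpha$ subject to the stated relations, i.e.\ as a quotient of a Verma-type module $U_q(\mathfrak{g}) \otimes_{U_q(\mathfrak{b})} k_\alpha$ by the submodule generated by $f_i^{1+\lambda_i(\alpha)} u_\alpha$. The weight-space decomposition is then forced by the $q^\lambda$-action, and local finite-dimensionality over each $U_q(\mathfrak{g})_i$ follows from the rank-one analysis applied to each $i$-string, which is precisely what the relation $f_i^{1+\lambda_i(\alpha)} u_\alpha = 0$ guarantees (the condition $\lambda_i(\alpha) \geq 0$ for $\alpha \in \Phi^+$ is what makes this exponent a nonnegative integer, so the quotient is nonzero). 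Irreducibility I would obtain by a standard highest-weight-maximal-submodule argument: any proper submodule lies in the sum of weight spaces of weight strictly below $\alpha$, so the sum of all proper submodules is again proper, giving a unique maximal one and hence a unique simple quotient. Finally, to see these exhaust the irreducibles, take any simple object $M$ in $\mathcal{O}_{\mathfrak{g}}$; local finiteness over $U_q(\mathfrak{n})$ plus the weight decomposition produces a nonzero vector annihilated by all $e_i$ and of some dominant weight $\alpha$, which then generates a copy of $V(\alpha)$.

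The main obstacle is \emph{complete reducibility}, the assertion that every object of $\mathcal{O}_{\mathfrak{g}}$ (not just the irreducibles, and in the Kac--Moody generality allowed here) decomposes as a direct sum of the $V(\alpha)$. The clean classical route is via a Casimir/separation-of-central-characters argument, but in the general quantum Kac--Moody setting the center is not large enough to separate all highest weights, so I would instead reduce an arbitrary extension to the statement that $\mathrm{Ext}^1(V(\alpha), V(\beta)) = 0$ in $\mathcal{O}_{\mathfrak{g}}$, proving this by a contravariant-duality argument on $\mathcal{O}_{\mathfrak{g}}$ together with the weight combinatorics that forbid a nonsplit self-extension. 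Since the statement is explicitly attributed to Lusztig \cite{L1}, the honest move is to isolate this vanishing as the one point that genuinely requires the integrability hypotheses and cite \cite{L1} for it, while carrying out the construction and classification of the $V(\alpha)$ in detail as above.
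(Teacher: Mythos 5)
The paper does not prove this proposition at all --- it is stated as a quotation of Lusztig's results in \cite{L1} and used as background, so there is no in-paper argument to compare yours against. Your overall architecture (rank-one analysis, Verma-type construction of $V(\alpha)$, highest-weight classification of the simples, and deferral of complete reducibility to \cite{L1}) is the standard route and is consistent with what the paper implicitly relies on.

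There is, however, one genuine gap in the part you carry out in detail. Your irreducibility argument --- every proper submodule lies in weight spaces strictly below $\alpha$, so the sum of all proper submodules is proper --- only shows that the cyclic module presented by the stated relations has a \emph{unique simple quotient}; it does not show that this module is itself simple, which is what the proposition asserts when it describes $V(\alpha)$ as the representation generated by $u_\alpha$ subject to those \emph{defining} relations. Equivalently, you have not shown that the maximal submodule of the Verma module is generated by the vectors $f_i^{1+\lambda_i(\alpha)}u_\alpha$. That is a real theorem (the quantum analogue of the Kac--Weyl presentation of integrable highest weight modules), and it is obtained either from the quantum Casimir / contravariant form, by specialisation to $q=1$, or as a consequence of complete reducibility: once complete reducibility is known, a cyclic integrable module generated by a singular vector of weight $\alpha$ decomposes as a direct sum of copies of the simple $L(\alpha)$, and cyclicity forces the $\alpha$-weight space to be one-dimensional, so the module is simple. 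As written, you invoke irreducibility \emph{before} complete reducibility, so you must either insert one of those arguments at that point or reorder the proof so that the presentation of $V(\alpha)$ is deduced afterwards. A smaller issue of the same flavour: reducing complete reducibility to $\mathrm{Ext}^1(V(\alpha),V(\beta))=0$ requires a word on why arbitrary objects of $\mathcal{O}_{\mathfrak{g}}$, which need not have finite length in the Kac--Moody setting, are controlled by extensions between simples.
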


\begin{example}
\label{sl2Example}
In the case of $\mathfrak{sl}_{2}$, these irreducible representations are $V(n)$ indexed by $n \in \mathbb{Z}_{\geq 0}$. They have a basis $B(n)=\{u_{i}^{(n)} \mid 0 \leq i \leq n\}$ of $t$-eigenvectors with
$$tu_{i}^{(n)} = q^{n-2i}u_{i}^{(n)}, \quad eu_{i}^{(n)} = [n-i+1]u_{i-1}^{(n)}, \quad fu_{i}^{(n)}=[i+1]u_{i+1}^{(n)}.$$
Here we use the notation $[n]=\frac{q^{n}-q^{-n}}{q-q^{-1}}$, $[0]=0$. So, up to scalar multiplication, $e$ decreases the index $i$, whilst $f$ increases the index. Thus we may define operators $\tilde{e}: u_{i}^{(n)} \mapsto u_{i-1}^{(n)}$ and $\tilde{f}: u_{i}^{(n)} \mapsto u_{i+1}^{(n)}$ on the basis $B(n)$. These are the \emph{Kashiwara operators}.
\end{example}

\begin{defn}
\label{AqgBialgebra}
Let $A_{q}(\mathfrak{g})$ denote the \emph{quantum co-ordinate algebra} defined as the direct sum $A_{q}(\mathfrak{g}) = \bigoplus_{\alpha \in \Phi_{+}} V(\alpha) \otimes V(\alpha)^{\ast}$, where $V(\alpha)^{\ast}$ denotes the dual vector space of $V(\alpha)$. The unit is $1 = v_{0} \otimes v_{0}^{\ast} \in V(0) \otimes V(0)^{\ast}$ and multiplication is defined by the composition
\begin{align*}
V(\alpha) \otimes V(\alpha)^{\ast} \otimes V(\beta) \otimes V(\beta)^{\ast} &\xrightarrow{\sim} V(\alpha) \otimes V(\beta) \otimes V(\beta)^{\ast} \otimes V(\alpha)^{\ast} \\
&\xrightarrow{\sim} V(\alpha) \otimes V(\beta) \otimes (V(\alpha) \otimes V(\beta))^{\ast}\\
& \rightarrow \left( \bigoplus_{\gamma} V(\gamma) \right) \otimes \left( \bigoplus_{\delta} V(\delta) \right)^{\ast} \\
& \twoheadrightarrow \bigoplus_{\gamma} V(\gamma) \otimes V(\gamma)^{\ast}
\end{align*}
where the third arrow is given by the decomposition into irreducible components and the fourth projects onto corresponding pairs of components. This algebra has a comultiplication given by
\begin{align*}
V(\alpha) \otimes V(\alpha)^{\ast} &\cong V(\alpha) \otimes k \otimes V(\alpha)^{\ast} \\
&\rightarrow V(\alpha) \otimes (V(\alpha)^{\ast} \otimes V(\alpha)) \otimes V(\alpha)^{\ast} \hookrightarrow A_{q}(\mathfrak{g}) \otimes A_{q}(\mathfrak{g})
\end{align*}
induced by the coevaluation maps $k \rightarrow V(\alpha)^{\ast} \otimes V(\alpha)$, and counit given by the evaluation maps $V(\alpha) \otimes V(\alpha)^{\ast} \rightarrow k$.
\end{defn}

\begin{remark}
By the quantum Peter-Weyl Theorem (Proposition 7.2.2 of \cite{K5}), this can be identified with a subalgebra of functions on the quantum enveloping algebra $U_{q}(\mathfrak{g})$, where $u \otimes v \in V(\alpha) \otimes V(\alpha)^{\ast}$ is seen as the function $x \mapsto \langle x \cdot u, v \rangle$. The image of $A_{q}(\mathfrak{g})$ is then the subalgebra of all functions in $U_{q}(\mathfrak{g})^{\ast}$ such that the left and right $U_{q}(\mathfrak{g})$ submodules of $U_{q}(\mathfrak{g})^{\ast}$ they generate are in $\mathcal{O}_{\mathfrak{g}}$ and $\mathcal{O}_{\mathfrak{g}^{\text{op}}}$ respectively.
\end{remark}

\subsection{The category of crystals}
We now describe the category of crystals, a generalisation of crystal bases, as Kashiwara defines in \cite{K3}. See \emph{loc. cit.} for the motivation and intuition behind the following definitions.

\begin{defn}
A \emph{pointed set} is a set with a distinct point or element, which we shall always denote by $0$. A morphism between pointed sets is a map of sets which preserves $0$. We denote this category $\mathit{Set}_{\bullet}$. We give this category the monoidal structure
$$A \otimes B := \{(a,b) \in A \times B \mid a \neq 0 \neq b\} \sqcup \{0\}.$$
We usually denote by $a \otimes b$ the nonzero elements $(a,b)$ in $A \otimes B$.
\end{defn}

\begin{defn}
The \emph{category of crystals}, denoted $\mathit{Crys}$, has as objects pointed sets $B$ equipped with maps
$$\tilde{e}_{i}:B  \rightarrow  B, \quad \tilde{f}_{i}:B  \rightarrow  B, \quad \text{wt}: B  \rightarrow  \Phi,$$
for all $i \in I$ such that, for a crystal $B$ and $b,b_{1},b_{2} \in B$,
\begin{itemize}
\item[i)] if $\tilde{e}_{i}(b) \neq 0$ then $\text{wt} (\tilde{e}_{i}b) = \text{wt} (b) + \alpha_{i}$;
\item[ii)] if $\tilde{f}_{i}(b) \neq 0$ then $\text{wt} (\tilde{f}_{i}b) = \text{wt} (b) - \alpha_{i}$; and
\item[iii)] $b_{2} = \tilde{f}_{i} b_{1}$ if and only if $b_{1} = \tilde{e}_{i} b_{2}$.
\end{itemize}
We will call $\tilde{e}_{i}$ and $\tilde{f}_{i}$ the \emph{Kashiwara operators}. For crystals $B_{1}, B_{2}$, we say that a map $\psi : B_{1} \rightarrow B_{2}$ is a morphism of crystals if, for all $b \in B_{1}$ and $i \in I$, $\psi(\tilde{e}_{i}b)=\tilde{e}_{i}\psi(b)$ and $\psi(\tilde{f}_{i}b)=\tilde{f}_{i}\psi(b)$. We will denote by $\varepsilon$ and $\phi$ the functions $B \setminus \{0\} \rightarrow \mathbb{Z} \sqcup \{\infty\}$ given by $\varepsilon_{i}(b)=\text{max}\{n \geq 0 \mid \tilde{e}_{i}^{n}(b) \neq 0\}$ and $\phi_{i}(b)=\text{max}\{n \geq 0 \mid \tilde{f}_{i}^{n}(b) \neq 0\}$ for $b \in B \setminus \{0\}$.
\end{defn}

\begin{remark}
Some definitions of crystals, for example in \cite{K3}, have a broader definition of morphism and use the term \emph{strict} to distinguish the morphisms we have defined above. 
\end{remark}

\begin{defn}
We will call a crystal \emph{finite} if its underlying pointed set is of finite cardinality. A pointed subset of a crystal $B$ is a \emph{subcrystal} if it is closed under the action of $\tilde{e}_{i}$ and $\tilde{f}_{i}$ for all $i \in I$. We say that a crystal is \emph{irreducible} if it has no nontrivial proper subcrystals.
\end{defn}

The main source of examples of objects in $\mathit{Crys}$ are crystal bases of integrable $U_{q}(\mathfrak{g})$-modules. We omit the rather involved definition of a crystal base and instead refer interested readers to \cite{K3}. What is important to note is that part of the data of a crystal base of a $U_{q}(\mathfrak{g})$-modules $M$ is a pointed subset, $B\subset M$. The subset $B$ is closed under the action of Kashiwara operators $\tilde{e}_{i}$ and $\tilde{f}_{i}$ for all $i \in I$. Furthermore, each element $b \in B$ is homogeneous with respect to the weight space decomposition, hence has an associated weight, $\text{wt}(b)$. Thus a crystal base gives a crystal, $B$, in $\mathit{Crys}$. This can be seen for $\mathfrak{g}=\mathfrak{sl}_{2}$ in Example \ref{sl2Example}.

\begin{theorem} (Kashiwara \cite{K6})
Each $V(\alpha)$ has a unique crystal base, up to equivalence, with associated crystal $B(\alpha)$ such that $B(\alpha) \cap V(\alpha)_{\alpha} = \{u_{\alpha}\}$. Furthermore,
$$B(\alpha)=\{\tilde{f}_{i_{1}}^{n_{1}}\tilde{f}_{i_{2}}^{n_{2}} ..\tilde{f}_{i_{k}}^{n_{k}}u_{\alpha} \mid i_{1},i_{2},..,i_{k} \in I, n_{1},n_{2},..,n_{k} \geq 0 \}.$$
\end{theorem}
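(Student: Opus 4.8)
The plan is to follow Kashiwara's inductive strategy, building the result up from the rank-one case via the tensor product rule. First I would settle $\mathfrak{g} = \mathfrak{sl}_2$ by direct computation using the explicit basis $B(n) = \{u_i^{(n)}\}$ recorded in the remark above. Let $A \subset K$ be the subring of rational functions in $q$ that are regular at $q = 0$, set $L(n) = \bigoplus_i A\, u_i^{(n)}$, and let $B$ be the image of the $u_i^{(n)}$ in $L(n)/qL(n)$. Since the Kashiwara operators are defined on an arbitrary weight vector by first decomposing it along its $e$-string (writing any $v = \sum_k f^{(k)} v_k$ with $e v_k = 0$) and then shifting the index, one checks directly that $\tilde e, \tilde f$ preserve $L(n)$ and permute $B \sqcup \{0\}$ as claimed. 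This gives both existence and uniqueness when $|I| = 1$.

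For the general case I would define $L(\alpha)$ to be the $A$-submodule of $V(\alpha)$ spanned by all $\tilde f_{i_1} \cdots \tilde f_{i_k} u_\alpha$, and $B(\alpha)$ to be the set of nonzero images of these elements in $L(\alpha)/qL(\alpha)$. The substance of the theorem is then the package of assertions: (i) $L(\alpha)$ is a free $A$-lattice stable under every $\tilde e_i$ and $\tilde f_i$; (ii) $B(\alpha)$ is a $k$-basis of $L(\alpha)/qL(\alpha)$; and (iii) $\tilde e_i, \tilde f_i$ restrict to maps $B(\alpha) \to B(\alpha) \sqcup \{0\}$ satisfying the crystal axioms, with $b = \tilde f_i b' \iff b' = \tilde e_i b$. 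The engine driving all of this is the tensor product rule: for crystal lattices $(L_1, B_1)$ and $(L_2, B_2)$ the pair $(L_1 \otimes_A L_2, B_1 \otimes B_2)$ is again one, with $\tilde e_i, \tilde f_i$ acting on $b_1 \otimes b_2$ by an explicit rule comparing $\varepsilon_i(b_2)$ with $\phi_i(b_1)$.

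Because $V(\alpha)$ embeds into tensor powers of lower-weight representations, and because each $\tilde e_i, \tilde f_i$ only sees the $U_q(\mathfrak{g})_i \cong U_q(\mathfrak{sl}_2)$-module structure, statements (i)--(iii) can be reduced weight-by-weight to the rank-one computation --- but only once the tensor product rule is known for the relevant factors. This circularity is precisely why the statements cannot be proved in isolation: Kashiwara's ``grand loop'' proves them all simultaneously by induction on the height $\sum_i n_i$ of the weight defect $\alpha - \text{wt}(b) = \sum_i n_i \alpha_i$. I would set up interlocking statements $A_\ell, B_\ell, C_\ell, \dots$ asserting (i)--(iii) together with the tensor rule up to level $\ell$, verify each $\ell$-step by applying the rank-one result to the $i$-strings, and close the loop. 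A clean way to control the lattice at each stage is to introduce a contravariant (polarisation) bilinear form on $V(\alpha)$ that is $A$-valued and becomes positive-definite modulo $q$ on $L(\alpha)/qL(\alpha)$; the resulting near-orthonormality forces $B(\alpha)$ to be a basis and pins down $L(\alpha)$.

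The hardest part is unquestionably setting up and closing this grand loop correctly: identifying the minimal self-supporting list of inductive statements and checking that the $i$-string analysis at level $\ell$ genuinely relies only on data at levels $< \ell$. Once the loop closes, the stated generation $B(\alpha) = \{\tilde f_{i_1}^{n_1} \cdots \tilde f_{i_k}^{n_k} u_\alpha \neq 0\}$ and the normalisation $B(\alpha) \cap V(\alpha)_\alpha = \{u_\alpha\}$ are immediate from the definition of $L(\alpha)$. Uniqueness up to equivalence then follows because any crystal base $(L, B)$ of $V(\alpha)$ meeting the highest-weight normalisation must contain $u_\alpha$ modulo $qL$ and be $\tilde f_i$-stable, hence $L \supseteq L(\alpha)$ and $B \supseteq B(\alpha)$, whereupon a rank count in each weight space forces equality.
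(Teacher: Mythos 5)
The paper does not prove this statement at all: it is imported verbatim as a citation to Kashiwara \cite{K6}, so there is no in-paper argument to compare against. Measured against the actual proof in the cited source, your roadmap is faithful: the rank-one computation over the local ring $A$ of rational functions regular at $q=0$, the tensor product rule for crystal lattices, the reduction of the general case to $i$-strings via the $U_q(\mathfrak{g})_i \cong U_q(\mathfrak{sl}_2)$ subalgebras, the polarisation form controlling the lattice, and the induction on the height of $\alpha - \mathrm{wt}(b)$ are exactly the ingredients of Kashiwara's argument, and your uniqueness argument (stability under $\tilde f_i$ forces $L \supseteq L(\alpha)$, then a rank count in each weight space) is also his.

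That said, as a \emph{proof} rather than a plan, there is one substantive gap, and you have correctly located it yourself: the grand loop. Saying ``I would set up interlocking statements $A_\ell, B_\ell, C_\ell, \dots$ and close the loop'' names the strategy without discharging it, and essentially all of the difficulty of the theorem lives there --- the precise list of simultaneous inductive hypotheses (in Kashiwara's paper there are on the order of a dozen, covering stability of the lattice, the behaviour of $\tilde e_i$ versus $\tilde f_i$ on $B(\alpha)$, the compatibility of the embeddings $V(\lambda_1+\lambda_2) \hookrightarrow V(\lambda_1)\otimes V(\lambda_2)$ with the lattices, and the positivity properties of the form) must be chosen so that each one at level $\ell$ uses only the others at levels $<\ell$, and verifying this non-circularity is a long, delicate case analysis that cannot be waved through. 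So: right route, honest about where the work is, but the work itself is not done here; anyone wanting the full argument must still go to \cite{K6}.
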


\begin{remark}
By Proposition \ref{CategoryOCompletelyReducible}, any integrable $U_q(\mathfrak{g})$-module in $\mathcal{O}_{\mathfrak{g}}$ has a unique crystal in $\mathit{Crys}$ arising as a disjoint union of crystals of the form $B(\alpha)$.
\end{remark}

\begin{defn}
We shall call crystals which are coproducts of crystals of the form $B(\alpha)$, as described in the previous remark, the \emph{crystals arising from integrable $U_q(\mathfrak{g})$-modules}. We shall denote their full subcategory $\mathit{Crys}_{\mathfrak{g}}$.
\end{defn}

\begin{defn}
For a crystal $B$, we define the crystal $B^{\vee} := \{ b^{\vee} \mid b \in B \}$ such that $\tilde{e}_{i}(b^{\vee}) = (\tilde{f}_{i}b)^{\vee}$,  $\tilde{f}_{i}(b^{\vee}) = (\tilde{e}_{i}b)^{\vee}$ and $\text{wt}(b^{\vee}) = -\text{wt}(b)$. For $\alpha \in \Phi_{+}$ we will denote $B(-\alpha) := B(\alpha)^{\vee}$.
\end{defn}

\begin{defn}
For a crystal $B$ we define its \emph{crystal graph} to be the graph whose vertices are the nonzero points in $B$ with arrows labeled by $i \in I$, $b\overset{i}{\longrightarrow }b'$ if and only if $b'=\tilde{f}_{i}b$.
\end{defn}

\begin{remark}
Crystal graphs are made up of disjoint unions of connected components. Since each subgraph of a crystal graph gives a subcrystal of the corresponding crystal, a crystal base is irreducible if and only if its crystal graph is connected.
\end{remark}

\begin{example} If $\mathfrak{g} = \mathfrak{sl_2}$, each irreducible $U_q(\mathfrak{sl_2})$-module $V(n)$, $n \in \mathbb{Z}_{\geq 0}$, has a corresponding crystal $B(n):=\{ u _{k} ^{(n)} \} _{0 \leq k \leq n}$. This has crystal structure defined by
$$\tilde{f}(u _{k} ^{(n)})=
\begin{cases} 
 u _{k+1} ^{(n)}, & k<n, \\
 0 & k=n,
\end{cases}
 \quad \tilde{e}(u _{k} ^{(n)})=
\begin{cases} 
 u _{k-1} ^{(n)}, & k>0, \\
 0 & k=0,
\end{cases}$$
so that
$$\varepsilon (x^iy^{n-i}) =  i, \quad \phi (x^iy^{n-i}) =  n-i, \quad \text{wt} (x^iy^{n-i}) = n-2i.$$
So the crystal base of an irreducible $U_{q} ( \mathfrak{sl_2} )$-module would have crystal graph
$$\circ \overbrace{ \rightarrow \circ \rightarrow \circ ..... \circ \rightarrow \circ \rightarrow}^{\varepsilon (b)} b \overbrace{ \rightarrow \circ \rightarrow \circ ..... \circ \rightarrow \circ \rightarrow}^{\phi (b)} \circ.$$
Thus we have $B(-n) \cong B(n)$ for $n \in \mathbb{Z}_{\geq 0}$.
\end{example}

The following important result characterises morphisms between irreducible crystals.

\begin{lem}[Schur's Lemma for Strict Morphisms,\cite{HK}]
\label{SchursLemma}
A nonzero morphism between irreducible crystals in $\mathit{Crys}_{\mathfrak{g}}$ is an isomorphism.
\end{lem}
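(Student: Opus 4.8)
The plan is to exploit strictness to manufacture two subcrystals — the image and the ``kernel'' — and then invoke irreducibility of the target and source respectively, before finishing with an induction for injectivity. Write $\psi : B_1 \rightarrow B_2$ for the given nonzero strict morphism, where $B_1, B_2$ are irreducible crystals arising from integrable modules; by the remark on crystal graphs they are connected, and by the classification theorem they are of the form $B(\alpha), B(\beta)$ with highest weight elements $u_\alpha, u_\beta$.

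First I would establish surjectivity. Since $\psi$ is strict, $\tilde{e}_{i}\psi(b) = \psi(\tilde{e}_{i}b)$ and $\tilde{f}_{i}\psi(b) = \psi(\tilde{f}_{i}b)$ for every $b \in B_1$ and every $i \in I$, so the pointed image $\psi(B_1)$ is closed under all Kashiwara operators and hence is a subcrystal of $B_2$. As $\psi \neq 0$ this subcrystal is nontrivial, and irreducibility of $B_2$ forces $\psi(B_1) = B_2$. Next, the set $K = \{b \in B_1 \mid \psi(b) = 0\}$ is likewise closed under $\tilde{e}_{i}, \tilde{f}_{i}$ (again by strictness, since $\tilde{e}_{i}0 = \tilde{f}_{i}0 = 0$), hence is a subcrystal of $B_1$; because $\psi \neq 0$ it is proper, so irreducibility of $B_1$ gives $K = \{0\}$, i.e.\ $\psi$ sends nonzero elements to nonzero elements. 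In particular $\psi(u_\alpha) \neq 0$ and $\tilde{e}_{i}\psi(u_\alpha) = \psi(\tilde{e}_{i}u_\alpha) = 0$ for all $i$, so $\psi(u_\alpha)$ is a highest weight element of the connected crystal $B(\beta)$; by uniqueness of such an element we conclude $\psi(u_\alpha) = u_\beta$.

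It remains to prove injectivity on nonzero elements, which I would do by induction on the depth $d(b) = \sum_i n_i$ defined via $\text{wt}(b) = \alpha - \sum_i n_i \alpha_i$, noting that $\tilde{e}_{i}$ strictly decreases depth. The base case $d(b) = 0$ is $b = u_\alpha$: if $\psi(b) = u_\beta$ then $\psi(\tilde{e}_{i}b) = \tilde{e}_{i}u_\beta = 0$, so $\tilde{e}_{i}b \in K = \{0\}$ for all $i$, whence $b$ is a highest weight element and thus $b = u_\alpha$. For the inductive step take a nonzero $b \neq u_\alpha$, so $\tilde{e}_{i}b \neq 0$ for some $i$, and suppose $\psi(b) = \psi(b')$ with $b'$ nonzero. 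Applying $\tilde{e}_{i}$ and using strictness gives $\psi(\tilde{e}_{i}b') = \tilde{e}_{i}\psi(b) = \psi(\tilde{e}_{i}b)$, where $\tilde{e}_{i}b$ has smaller depth and is nonzero; since $K = \{0\}$ the element $\tilde{e}_{i}b'$ is also nonzero, so the induction hypothesis yields $\tilde{e}_{i}b' = \tilde{e}_{i}b$. The crystal axiom $b_1 = \tilde{e}_{i}b_2 \Leftrightarrow b_2 = \tilde{f}_{i}b_1$ then gives $b = \tilde{f}_{i}(\tilde{e}_{i}b) = \tilde{f}_{i}(\tilde{e}_{i}b') = b'$, completing the induction.

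Finally, $\psi$ is a bijection preserving $0$, commuting with every $\tilde{e}_{i}, \tilde{f}_{i}$, and preserving $\varepsilon_{i}, \phi_{i}, \text{wt}$ on nonzero elements; its set-theoretic inverse therefore inherits all these properties, so $\psi$ is an isomorphism in $\mathit{Crys}$. I expect the injectivity argument to be the main obstacle, since surjectivity and triviality of the kernel are immediate consequences of strictness plus irreducibility, whereas injectivity genuinely requires both the connectedness of the source (to descend toward the highest weight via $\tilde{e}_{i}$) and the precise partial-inverse relation $b_1 = \tilde{e}_{i}b_2 \Leftrightarrow b_2 = \tilde{f}_{i}b_1$ to reconstruct $b$ uniquely from $\tilde{e}_{i}b$.
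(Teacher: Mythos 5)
Your argument is correct, but note that the paper itself offers no proof of this lemma --- it simply defers to \cite{HK} --- so there is nothing internal to compare against; what follows is a comparison with the standard argument. Your three steps all check out: the image and the zero-fibre $K$ are closed under all $\tilde{e}_{i},\tilde{f}_{i}$ (using $\tilde{e}_{i}(0)=\tilde{f}_{i}(0)=0$), hence are unions of connected components, and connectedness of $B(\beta)$ and $B(\alpha)$ respectively forces surjectivity and $K=\{0\}$; the weight-depth induction for injectivity is sound because every non-highest-weight element $b$ of $B(\alpha)$ satisfies $\tilde{e}_{i}b\neq 0$ for some $i$ (as $b=\tilde{f}_{i}c$ with $c\neq 0$) and the axiom $b_{1}=\tilde{e}_{i}b_{2}\Leftrightarrow b_{2}=\tilde{f}_{i}b_{1}$ recovers $b$ from $\tilde{e}_{i}b$. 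The more economical route, which is essentially what \cite{HK} and Kashiwara's uniqueness theorem give, is to observe that once $\psi(u_{\alpha})=u_{\beta}$ (which also forces $\alpha=\beta$, since $\psi$ preserves weights and $\varepsilon_{i}$), strictness pins down $\psi$ completely on all of $B(\alpha)=\{\tilde{f}_{i_{1}}^{n_{1}}\cdots\tilde{f}_{i_{k}}^{n_{k}}u_{\alpha}\}$: necessarily $\psi(\tilde{f}_{i_{1}}^{n_{1}}\cdots\tilde{f}_{i_{k}}^{n_{k}}u_{\alpha})=\tilde{f}_{i_{1}}^{n_{1}}\cdots\tilde{f}_{i_{k}}^{n_{k}}u_{\beta}$, so $\psi$ coincides with the canonical isomorphism $B(\alpha)\cong B(\beta)$ and bijectivity comes for free, without a separate surjectivity step or induction. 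Your version is longer but has the merit of isolating exactly where connectedness of source and target are each used.
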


Kashiwara defines the following monoidal structure on $\mathit{Crys}$ in \cite{K7}.

\begin{defn}
\label{CrystalTensor}
Let $B_{1}$, $B_{2}$ be crystals. Their tensor product is the pointed set $B_{1} \otimes B_{2} = \{b_{1} \otimes b_{2} \mid b_{1} \in B_{1}, b_{2} \in B_{2}\}$ with
$$\begin{array}{rcl}
\tilde{e}_{i}(b_{1} \otimes b_{2}) & = & \begin{cases}
       \tilde{e}_{i}b_{1} \otimes b_{2}  &\quad \text{if } \phi_{i} (b_{1}) \geq \varepsilon_{i} (b_{2})\\
       b_{1} \otimes \tilde{e}_{i}b_{2}  &\quad \text{if } \phi_{i} (b_{1}) < \varepsilon_{i} (b_{2}),\\
     \end{cases} \\
\tilde{f}_{i}(b_{1} \otimes b_{2}) & = & \begin{cases}
       \tilde{f}_{i}b_{1} \otimes b_{2}  &\quad \text{if } \phi_{i} (b_{1}) > \varepsilon_{i} (b_{2})\\
       b_{1} \otimes \tilde{e}_{i}b_{2}  &\quad \text{if } \phi_{i} (b_{1}) \leq \varepsilon_{i} (b_{2}),\\
     \end{cases} \\
\text{wt}(b_{1} \otimes b_{2}) & = & \text{wt}(b_{1}) + \text{wt}(b_{2}).
\end{array}$$
The unit for this monoidal structure is the crystal $\mathbb{B}(0)=\{b_{0},0\}$ where $\tilde{e}_{i}b_{0}=0=\tilde{f}_{i}b_{0}$ and $\text{wt}(b_{0})=0$.
\end{defn}

\begin{example}
\label{CrystalTensorsl2}
In the $\mathfrak{sl_2}$ case, for the crystals $B(n), B(m)$ this can be visualised as follows:

\begin{tikzpicture}[scale=0.3]

\node at (-3,0) {$B(m)$}; \node at (-1,2) {$B(n)$};

\draw(1,-1) circle [radius=0.25];\draw(3,-1) circle [radius=0.25];\draw(5,-1) circle [radius=0.25];
\draw(7,-1) circle [radius=0.25];\draw(11,-1) circle [radius=0.25];\draw(13,-1) circle [radius=0.25];
\draw(15,-1) circle [radius=0.25];
\draw(1,-3) circle [radius=0.25];\draw(3,-3) circle [radius=0.25];\draw(5,-3) circle [radius=0.25];
\draw(7,-3) circle [radius=0.25];\draw(11,-3) circle [radius=0.25];\draw(13,-3) circle [radius=0.25];
\draw(15,-3) circle [radius=0.25];
\draw(13,-5) circle [radius=0.25];\draw(15,-5) circle [radius=0.25];
\draw(1,-8) circle [radius=0.25];\draw(3,-8) circle [radius=0.25];\draw(5,-8) circle [radius=0.25];
\draw(7,-8) circle [radius=0.25];\draw(13,-8) circle [radius=0.25];\draw(15,-8) circle [radius=0.25];
\draw(1,-10) circle [radius=0.25];\draw(3,-10) circle [radius=0.25];\draw(5,-10) circle [radius=0.25];
\draw(7,-10) circle [radius=0.25];\draw(13,-10) circle [radius=0.25];\draw(15,-10) circle [radius=0.25];
\draw(5,-12) circle [radius=0.25];\draw(7,-12) circle [radius=0.25];\draw(13,-12) circle [radius=0.25];\draw(15,-12) circle [radius=0.25];

\node at (2,-1.1) {$\rightarrow$};\node at (4,-1.1) {$\rightarrow$};\node at (6,-1.1) {$\rightarrow$};
\node at (12,-1.1) {$\rightarrow$};\node at (14,-1.1) {$\rightarrow$};
\node at (2,-3.1) {$\rightarrow$};\node at (4,-3.1) {$\rightarrow$};\node at (6,-3.1) {$\rightarrow$};
\node at (12,-3.1) {$\rightarrow$};
\node at (2,-8.1) {$\rightarrow$};\node at (4,-8.1) {$\rightarrow$};\node at (6,-8.1) {$\rightarrow$};
\node at (2,-10.1) {$\rightarrow$};\node at (4,-10.1) {$\rightarrow$};

\node at (15,-2.1) {$\downarrow$};\node at (13,-4.1) {$\downarrow$};\node at (15,-4.1) {$\downarrow$};
\node at (7,-9.1) {$\downarrow$};\node at (13,-9.1) {$\downarrow$};\node at (15,-9.1) {$\downarrow$};
\node at (5,-11.1) {$\downarrow$};\node at (7,-11.1) {$\downarrow$};\node at (13,-11.1) {$\downarrow$};
\node at (15,-11.1) {$\downarrow$};

\draw [dashed] (7.4, -1) -- (10.6,-1); \draw [dashed] (7.4, -3) -- (10.6,-3);
\draw [dashed] (1, -5) -- (11,-5); \draw [dashed] (1, -6.5) -- (9,-6.5);
\draw [dashed] (11, -5) -- (11,-12); \draw [dashed] (9, -6.5) -- (9,-12);
\draw [dashed] (13, -5.4) -- (13,-7.6);\draw [dashed] (15, -5.4) -- (15,-7.6);

\draw[thick](1,2) circle [radius=0.25];\draw[thick](3,2) circle [radius=0.25];\draw[thick](5,2) circle [radius=0.25];
\draw[thick](7,2) circle [radius=0.25];\draw[thick](11,2) circle [radius=0.25];\draw[thick](13,2) circle [radius=0.25];
\draw[thick](15,2) circle [radius=0.25];

\node at (2,1.9) {$\rightarrow$};\node at (4,1.9) {$\rightarrow$};\node at (6,1.9) {$\rightarrow$};
\node at (12,1.9) {$\rightarrow$};\node at (14,1.9) {$\rightarrow$};

\draw [dashed, thick] (7.4, 2) -- (10.6,2);

\draw[thick](-3,-1) circle [radius=0.25];\draw[thick](-3,-3) circle [radius=0.25];\draw[thick](-3,-8) circle [radius=0.25];
\draw[thick](-3,-10) circle [radius=0.25];\draw[thick](-3,-12) circle [radius=0.25];

\node at (-3,-2.1) {$\downarrow$};\node at (-3,-9.1) {$\downarrow$};\node at (-3,-11.1) {$\downarrow$};

\draw [dashed, thick] (-3, -3.4) -- (-3,-7.6);

\draw(-14,-1) circle [radius=0];

\end{tikzpicture}
\end{example}

\begin{remark}
In Henriques and Kamnitzer's paper on \emph{Crystals and Coboundary Categories} \cite{HK} they describe a \emph{commuter for crystals} $\sigma_{B_{1} \otimes B_{2}} : B_{1} \otimes B_{2} \rightarrow B_{2} \otimes B_{1}$ for crystals $B_1,B_2$. This provides a way of commuting tensor products of crystals. If $\zeta :B \rightarrow B$ exchanges the highest and lowest weight elements of a crystal, essentially reversing the crystal graph, then $b\otimes b' \mapsto \zeta(\zeta(b')\otimes \zeta(b))$.
\end{remark}

\begin{prop}[\cite{K3}]
\label{DecompositionOfCrystals}
For $\alpha, \beta \in \Phi_{+}$, there is an isomorphism of crystals
$$B(\alpha) \otimes B(\beta) \cong \bigsqcup B(\alpha + \text{wt}(b))$$
where the disjoint union ranges over all $b \in B(\beta)$ such that $\varepsilon_{i}(b) \leq \lambda_{i}(\alpha)$ for all $i \in I$.
\end{prop}

\begin{corollary}
For $\alpha, \beta \in \Phi_{+}$, $B(\alpha + \beta)$ appears as a term in the decomposition of $B(\alpha) \otimes B(\beta)$ into irreducible components.
\end{corollary}

\begin{proof}
This follows from Proposition \ref{DecompositionOfCrystals}, since $\varepsilon_{i} (u_{\beta}) = 0 \leq \lambda_{i}(\alpha)$ for each $i$.
\end{proof}

\section{The crystal algebra $\mathcal{B}$}

\subsection{The crystal associated to $A_{q}(\mathfrak{g})$}

Recall the definition of the quantum co-ordinate ring $A_{q}(\mathfrak{g})$ from the first section. It is known that its comodules are precisely the representations of $U_{q}(\mathfrak{g})$ in $\mathcal{O}_{\mathfrak{g}}$. This is because, as a coalgebra, $A_{q}(\mathfrak{g})$ is a direct sum of coalgebras $V(\alpha) \otimes V(\alpha)^{\ast}$ whose comodules are precisely direct sums of copies of $V(\alpha)$. The focus of this section is to investigate whether an analogous result is true in the setting of crystal bases. We will consider the corresponding crystal
$$\mathcal{B} := \bigsqcup\nolimits _{\alpha \in \Phi^{+}} B(\alpha) \otimes B(-\alpha).$$

\begin{defn}
\label{Multiplication on crystal B}
Let $\mathcal{B}$ be the crystal $\mathcal{B}:=\bigsqcup_{\alpha \in \Phi_+} B(\alpha) \otimes B(-\alpha)$. For $\alpha, \beta \in \Phi$, we will denote by $b \cdot b'$ the image of ${b \otimes b'}$ in $B(\alpha) \otimes B(\beta)$ under the decomposition into irreducible components
$$B(\alpha) \otimes B(\beta) \cong \bigsqcup\nolimits_{\gamma \in \Gamma_{\alpha,\beta}} B(\gamma).$$
We then define a map
$$\mu_{\alpha,\beta}:B(\alpha) \otimes B(-\alpha) \otimes B(\beta) \otimes B(-\beta) \rightarrow \bigsqcup_{\gamma \in \Gamma_{\alpha,\beta}}B(\gamma) \otimes B(-\gamma)$$
by mapping $b \otimes b'^{\vee} \otimes d \otimes d'^{\vee}$ to $(b\cdot d) \otimes (b' \cdot d')^{\vee}$ whenever $b \otimes d$ and $b' \otimes d'$ lie in the same irreducible component of $B(\alpha) \otimes B(\beta)$ and to $0$ if not. Collectively, these induce a map $\mathcal{B} \otimes \mathcal{B} \rightarrow \mathcal{B}$, which we will denote $\mu$. Let $\eta$ denote the embedding
$$\eta: B(0) \cong B(0) \otimes B(0) \hookrightarrow \mathcal{B}.$$
\end{defn}

Note that the maps $\mu_{\alpha,\beta}$ above are not morphisms of crystals, just of pointed sets.

\begin{prop}
\label{Crystal Algebra B}
The maps $\mu$ and $\eta$ define an algebra structure on $\mathcal{B}$ in $\text{Set}_{\bullet}$.
\end{prop}

\begin{proof}
Let $\alpha, \beta, \gamma \in \Phi_{+}$ and let $b,b' \in B(\alpha)$, $c,c' \in B(\beta)$, $d,d' \in B(\gamma)$. By the associativity of the monoidal structure in Definition \ref{CrystalTensor}, $(b \cdot c) \cdot d$ corresponds to $b \cdot (c \cdot d)$ under the decomposition of $(B(\alpha) \otimes B(\beta)) \otimes B(\gamma) \cong B(\alpha) \otimes (B(\beta) \otimes B(\gamma))$ into irreducible components. Thus both $(\mu \otimes \text{Id}) \circ \mu$ and $(\text{Id} \otimes \mu) \circ \mu$ map $b \otimes b'^{\vee} \otimes c \otimes c'^{\vee} \otimes d \otimes d'^{\vee}$ to $(b \cdot c \cdot d) \otimes (b' \cdot c' \cdot d')^{\vee}$ if $b \otimes c \otimes d$ and $b' \otimes c' \otimes d'$ lie in the same irreducible component of $B(\alpha) \otimes B(\beta) \otimes B(\gamma)$ or $0$ otherwise. So $\mu$ is an associative multiplication. Since $B(0) \otimes B(\alpha) \xrightarrow{\sim} B(\alpha)$, $b_{0} \otimes x \mapsto x$, is an isomorphism, and likewise $B(\alpha) \otimes B(0) \xrightarrow{\sim} B(\alpha)$, $\eta$ is a unit for this multiplication.
\end{proof}

\begin{defn}
For a symmetric monoidal category $\mathcal{C}$ with monoidal unit $\mathbb{I}$, we say that an object $A^{\vee}$ is \emph{dual} to an object $A$ in $\mathcal{C}$ if there exist maps
$$\iota_{A}: \mathbb{I} \rightarrow A^{\vee} \otimes A, \quad \epsilon_{A}:A \otimes A^{\vee} \rightarrow \mathbb{I},$$
called the coevaluation and evaluation respectively, such that the composition
$$A \cong \mathbb{I} \otimes A \xrightarrow{\text{Id} \otimes \iota_{A}} A \otimes A^{\vee} \otimes A \xrightarrow{\epsilon_{A} \otimes \text{Id}}A \otimes \mathbb{I} \cong A$$
is the identity on $A$. We will say that $A$ is \emph{dualisable} if such a dual $A^{\vee}$ exists.
\end{defn}

Recall that, in Definition \ref{AqgBialgebra}, the comultiplication of $A_{q}(\mathfrak{g})$ is induced by coevaluation maps $k \rightarrow V(\alpha)^{\ast} \otimes V(\alpha)$. These exists since each $V(\alpha)$ is dualisable in the category of vector spaces, with dual $V(\alpha)^{\ast}$. We do not, however, have dualisability for $B(\alpha)$ in $\mathit{Set}_{\bullet}$.

\begin{lem}
The pointed set $B(\alpha)$ is not dualisable in the symmetric monoidal category $\mathit{Set}_{\bullet}$ for nonzero $\alpha \in \Phi_{+}$.
\end{lem}

\begin{proof}
Suppose we have a pointed set $A$ along with evaluation and coevalutation maps $\epsilon$ and $\iota$ that exhibit $A$ as a dual to $B(\alpha)$. The monoidal unit in $\mathit{Set}_{\bullet}$ is the pointed set $\mathbb{I}=\{1,0\}$, and so the map $\iota$ is given by an element $\iota(1)=a \otimes b \in A \otimes B(\alpha)$. Then for any $b' \in B(\alpha)$
$$1 \otimes b'= \epsilon(a \otimes b) \otimes b \in \mathbb{I} \otimes B(\alpha)$$
so $b=b'$. But this gives a contradiction as $B(\alpha)$ has more than one non-zero element for $\alpha \neq 0$.
\end{proof}

The above lemma means we cannot proceed in direct analogy to $A_{q}(\mathfrak{g})$ to give $\mathcal{B}$ a bialgebra structure. In Section \ref{WorkingOverZ} we will work in $\mathbb{Z}$-modules instead of pointed sets where we regain dualisability and can construct a bialgebra structure on $\mathbb{B}:=\mathbb{Z}\mathcal{B}$. Before that, we will use a categorical approach to determine that $\mathit{Crys}_{\mathfrak{g}}$ cannot be reconstructed as comodules over a coalgebra in $\mathit{Set}_{\bullet}$ but can be reconstructed as coalgebras over a comonad on this category.

\section{A functorial approach to crystals}

\subsection{Comonads and the Barr-Beck Theorem}

We recalling the definitions of comonads, which generalised notions of coalgebras in the setting of functors on categories. For more details see Borceaux's \emph{Handbook of Categorical Algebra 2} \cite[p.~189-197]{B}.

\begin{defn}
A \emph{comonad} on a category $\mathcal{C}$ is a triple $\mathbb{U}=(U, \varepsilon, \Delta)$, where $U: \mathcal{C} \rightarrow \mathcal{C}$ is a functor, and $\varepsilon : U \Rightarrow \text{id}_{\mathcal{C}}$ and $\Delta : U \Rightarrow U \circ U$ are natural transformations satisfying
$$(\text{Id} \ast \Delta)\circ \Delta = (\Delta \ast \text{Id})\circ \Delta: U \Rightarrow U \circ U \circ U$$
and
$$(\text{Id} \ast \varepsilon)\circ \Delta = \text{Id} = (\varepsilon \ast \text{Id})\circ \Delta: U \Rightarrow U.$$
Here, $\ast$ denotes the horisontal composition of natural transformations. A \emph{coalgebra} on this monad, which corresponds to the idea of a comodule over a traditional coalgebra, is a pair $(D, \zeta)$ where $D$ is an object in the category and $\zeta : D \rightarrow U(D)$ is a morphism in $\mathcal{C}$ satisfying
$$U(\zeta) \circ \zeta = \Delta_{D} \circ \zeta : D \rightarrow UU(D) \text{ and }\varepsilon_{D} \circ \zeta=\text{Id}_{D}.$$
A \emph{morphism of coalgebras} $g:(D, \zeta) \rightarrow (D', \zeta')$ is a morphism $g:D \rightarrow D'$ in the category such that $U(g) \circ \zeta = \zeta' \circ g$. Coalgebras in $\mathcal{C}$ over a comonad $\mathbb{U}$ form a category which we shall denote $\mathcal{C}_{\mathbb{U}}$.
\end{defn}

\begin{remark}
Suppose we have an adjunction $F: \mathcal{C} \leftrightarrow \mathcal{D}:G$ with unit $\eta : \text{id}_{\mathcal{C}} \Rightarrow G\circ F$ and counit $\varepsilon : F \circ G \Rightarrow \text{id}_{\mathcal{D}}$. Then $\mathbb{U} = (U := F\circ G, \varepsilon, \Delta)$ forms a comonad where $\Delta := \text{id}_{F} \ast \eta \ast \text{id}_{G}$. Furthermore, we have a \emph{comparison functor} $J_{\mathbb{U}} : \mathcal{C} \rightarrow \mathcal{D}_{\mathbb{U}}$ defined by
$$J_{\mathbb{U}}(A)  =  (F(A),F(\eta_{A})), \quad J_{\mathbb{U}}(f)  =  F(f)$$
for all objects $A$ and morphisms $f$ in $\mathcal{C}$.
\end{remark}

\begin{defn}
A functor $F: \mathcal{C} \rightarrow \mathcal{D}$ is \emph{comonadic} if it has a right adjoint $G: \mathcal{D} \rightarrow \mathcal{C}$ and the comparison functor $J_{\mathbb{U}} : \mathcal{C} \rightarrow \mathcal{D}_{\mathbb{U}}$ is an equivalence of categories, where $\mathbb{U} = (U := F\circ G, \varepsilon, \Delta)$ is the resulting comonad on $\mathcal{D}$.
\end{defn}

The following result, sometimes known as \emph{Beck's Monadicity Theorem}, gives criterion for when a functor is comonadic. 

\begin{theorem}\emph{(The Dual Barr-Beck Theorem \cite[p.~212]{B}).}
\label{BarrBeck}
A functor $F: \mathcal{C} \rightarrow \mathcal{D}$ is comonadic if and only if
\begin{itemize}
\item[i)]$F$ has a right adjoint $G$;
\item[ii)]$F$ reflects isomorphisms; and
\item[iii)]if a pair $f,g : A \rightarrow B$ are morphisms in $\mathcal{C}$ such that $F(f), F(g)$ have a split equaliser $h: H \rightarrow F(A)$ in $\mathcal{D}$ then $f,g$ have an equaliser $e: E \rightarrow A$ in $\mathcal{C}$ such that $F(e)=h, F(E) = H$. 
\end{itemize}
\end{theorem}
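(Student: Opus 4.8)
The plan is to deduce this statement from the monadic Barr--Beck Theorem stated above by passing to opposite categories. The essential observation is that comonads, coalgebras, comonadic functors, split equalizers and right adjoints are precisely the formal duals of monads, algebras, monadic functors, split coequalizers and left adjoints respectively; so nothing need be reproved from scratch, provided the dictionary between the two pictures is set up carefully.

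First I would record the dualization dictionary. Given $F: \mathcal{C} \to \mathcal{D}$ with right adjoint $G$, so that $F \dashv G$, passing to opposite categories yields $G^{\mathrm{op}} \dashv F^{\mathrm{op}}$, so that $F^{\mathrm{op}}: \mathcal{C}^{\mathrm{op}} \to \mathcal{D}^{\mathrm{op}}$ is a right adjoint with left adjoint $G^{\mathrm{op}}$. The associated monad is $F^{\mathrm{op}} \circ G^{\mathrm{op}} = (F \circ G)^{\mathrm{op}} = \mathbb{U}^{\mathrm{op}}$, the formal dual of the comonad $\mathbb{U}$ determined by $F \dashv G$. I would then verify that algebras over $\mathbb{U}^{\mathrm{op}}$ are precisely the opposites of coalgebras over $\mathbb{U}$, giving an isomorphism of categories $(\mathcal{D}^{\mathrm{op}})^{\mathbb{U}^{\mathrm{op}}} \cong (\mathcal{D}_{\mathbb{U}})^{\mathrm{op}}$, and that under this identification the comparison functor $K^{\mathbb{U}^{\mathrm{op}}}: \mathcal{C}^{\mathrm{op}} \to (\mathcal{D}^{\mathrm{op}})^{\mathbb{U}^{\mathrm{op}}}$ coincides with $(J_{\mathbb{U}})^{\mathrm{op}}$. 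Since a functor is an equivalence of categories if and only if its opposite is, this shows that $F$ is comonadic precisely when $F^{\mathrm{op}}$ is monadic.

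With this in hand, I would apply the monadic Barr--Beck Theorem to $F^{\mathrm{op}}$ and translate each of its three conditions back to $F$. Condition (i) for $F^{\mathrm{op}}$ says that $F^{\mathrm{op}}$ has a left adjoint, which is exactly the existence of a right adjoint $G$ for $F$. Condition (ii), reflection of isomorphisms, is self-dual: since $f$ is invertible in $\mathcal{C}$ if and only if $f^{\mathrm{op}}$ is invertible in $\mathcal{C}^{\mathrm{op}}$, and $F^{\mathrm{op}}(f^{\mathrm{op}}) = (F(f))^{\mathrm{op}}$, the functor $F^{\mathrm{op}}$ reflects isomorphisms if and only if $F$ does. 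For condition (iii), a split coequalizer in $\mathcal{D}^{\mathrm{op}}$ is by definition a split equalizer in $\mathcal{D}$, and a coequalizer in $\mathcal{C}^{\mathrm{op}}$ is an equalizer in $\mathcal{C}$; hence the monadic coequalizer-lifting condition for $F^{\mathrm{op}}$ is exactly the equalizer-lifting condition (iii) stated for $F$. Combining these equivalences with the identification of the previous paragraph yields the result.

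The main obstacle is not any single deep step but the bookkeeping of the dualization: one must be careful to track which functor is the left and which the right adjoint after passing to opposites, and above all to check precisely that $(\mathcal{D}^{\mathrm{op}})^{\mathbb{U}^{\mathrm{op}}} \cong (\mathcal{D}_{\mathbb{U}})^{\mathrm{op}}$ and that the comparison functors correspond under taking opposites, since the whole argument rests on this identification. Once the dictionary is verified, the equivalence of the three conditions is routine.
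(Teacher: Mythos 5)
Your proposal is correct and matches the paper's (implicit) justification: the paper simply asserts the comonadic statement as the formal dual of the monadic Barr--Beck Theorem, which is exactly the dualization argument you spell out, including the key identifications $G^{\mathrm{op}} \dashv F^{\mathrm{op}}$, $(\mathcal{D}^{\mathrm{op}})^{\mathbb{U}^{\mathrm{op}}} \cong (\mathcal{D}_{\mathbb{U}})^{\mathrm{op}}$, and the correspondence of split (co)equalizers under passage to opposites. Your write-up is in fact more careful than the paper, which offers no proof beyond the word ``dual.''
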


\subsection{Crystals as coalgebras of a comonad}

\begin{defn}
Suppose we have a set $\mathbb{X}$ and, for each $x \in \mathbb{X}$, a pointed set $B(x)$. Let $\mathcal{C}_{\mathbb{X}}$ denote the category whose objects are sets $A$ equipped with a map $\pi_{A}:A \rightarrow \mathbb{X}$. Morphisms in this category are defined to be morphisms of pointed sets $A \sqcup \{0\} \xrightarrow{\psi} A' \sqcup \{0\}$ such that $\pi_{A}(a)=\pi_{A'}\phi(a)$ whenever $\phi(a) \neq 0$. We will denote by $F$ the functor
$$\mathcal{C}_{\mathbb{X}} \rightarrow \mathit{Set}_{\bullet}, \quad (A \xrightarrow{\pi_{A}} \mathbb{X}) \mapsto \bigsqcup\nolimits_{a \in A} B(\pi_{A}(a)).$$
For a morphism $A \sqcup \{0\} \xrightarrow{\psi} A' \sqcup \{0\}$ in $\mathcal{C}_{\mathbb{X}}$, $F(\phi)$ maps $B(\pi_{A}(a))$ isomorphically to $B(\pi_{A'}\phi(a))$ whenever $\phi(a) \neq 0$, and maps $B(\pi_{A}(a))$ to $0$ when $\phi(a) = 0$.
\end{defn}

\begin{lem}
\label{CrystalsInGeneralSetting}
If $\mathbb{X}=\Phi_{+}$ and $B(\alpha)$ are as previously defined for $\alpha \in \Phi_{+}$ then $\mathcal{C}_{\mathbb{X}} \cong \mathit{Crys}_{\mathfrak{g}}$. Furthermore, under this equivalence, $F$ is the forgetful functor to pointed sets.
\end{lem}

\begin{proof}
This equivalence is given by
$$\mathcal{C}_{\mathbb{X}} \rightarrow \mathit{Crys}_{\mathfrak{g}}, \quad (A\xrightarrow{\pi_{A}}\mathbb{X}) \mapsto \bigsqcup\nolimits_{a \in A} B(\pi_{A}(a)),$$
where a morphism $\psi:A \sqcup \{0\} \rightarrow A' \sqcup \{0\}$ in $\mathcal{C}_{\mathbb{X}}$ is mapped to the morphism of crystals $\bigsqcup\nolimits_{a \in A} B(\pi_{A}(a)) \rightarrow \bigsqcup\nolimits_{a' \in A'} B(\pi_{A'}(a'))$ where $B(\pi_{A}(a))$ is mapped isomorphically to $B(\pi_{A'}\phi(a))$ whenever $\phi(a) \neq 0$, and to $0$ when $\phi(a) = 0$. Its quasi-inverse is given by the functor
$$\mathit{Crys}_{\mathfrak{g}} \rightarrow \mathcal{C}_{\mathbb{X}}, \quad \bigsqcup\nolimits_{i \in I} B(\alpha_{i}) \mapsto (I, i \mapsto \alpha_{i}).$$
By Lemma \ref{SchursLemma}, a morphism of crystals $\bigsqcup\nolimits_{i \in I} B(\alpha_{i}) \rightarrow \bigsqcup\nolimits_{j \in J} B(\beta_{j})$ maps each $B(\alpha_{i})$ either isomorphically to some $B(\beta_{j})$, where $\alpha_{i}=\beta_{j} \in \mathbb{X}$, or to $0$. The resulting map $I \sqcup \{0\} \rightarrow J \sqcup \{0\}$ maps $i \mapsto j$ in the former case and $i \mapsto 0$ in the latter.
\end{proof}

\begin{defn}
Let $G$ denote the functor
$$G: \mathit{Set}_{\bullet} \rightarrow \mathcal{C}_{\mathbb{X}}, \quad X \mapsto G(X)$$
where $G(X)$ is the set $\bigsqcup_{x \in \mathbb{X}} \left( \text{Hom}(B(x),X) \setminus \{0\} \right)$
equipped with the map $\pi_{G(X)}$ which takes $f \in \text{Hom}(B(x),X) \setminus \{0\}$ to $x$. A map of pointed sets $\psi:X \rightarrow Y$ gives the map $G(\psi):G(X) \rightarrow G(Y)$ taking $f \in \text{Hom}(B(x),X)$ to $\phi \circ f \in \text{Hom}(B(x),Y)$.
\end{defn}

\begin{prop}
\label{FGAdjunction}
There is an adjunction $F \dashv G$ between $\mathcal{C}_{\mathbb{X}}$ and the category of pointed sets.
\end{prop}

\begin{proof}
Suppose we have a morphism $(A \xrightarrow{\pi_{A}} \mathbb{X}) \xrightarrow{f} (GX \xrightarrow{\pi_{GX}} \mathbb{X})$ in $\mathcal{C}_{\mathbb{X}}$. Each $a \in A$ is either mapped to $0$ or to an element of
$$\pi_{GX}^{-1}(\pi(a))=\text{Hom}(B(\pi(a)),X) \setminus \{0\}.$$
That is, $a$ is mapped to a function $f_{a} \in \text{Hom}(B(\pi(a)),X)$ which allows us to define a map $FA = \bigsqcup\nolimits_{a \in A} B(\pi_{A}(a)) \rightarrow X$. Conversely, a map of pointed sets $FA=\bigsqcup\nolimits_{a \in A} B(\pi_{A}(a)) \xrightarrow{g} X$ is given by a collection of maps
$$g_{a} \in \text{Hom}(B(\pi(a)),X) = (\text{Hom}(B(\pi(a)),X) \setminus \{0\}) \sqcup \{0\}.$$
Thus we get a map
$$A \sqcup \{0\} \rightarrow \bigsqcup_{a \in A}(\text{Hom}(B(\pi(a)),X) \setminus \{0\}) \sqcup \{0\} = GX \sqcup \{0\}$$
by taking $a \in A$ to the function $g_{a}$. These mutual inverses give the adjunction
$$\text{Hom}_{\mathit{Set}_{\bullet}}(FA, X) \cong \text{Hom}_{\mathcal{C}_{\mathbb{X}}}(A,GX).$$
Note that the unit of this adjunction is given by maps
$$FG(X) = \coprod\nolimits_{x \in \mathbb{X}}\coprod\nolimits_{\text{Hom}(B(x),X)} B(x) \rightarrow X,$$
for pointed sets $X$, where the copy of $B(x)$ indexed by $f \in \text{Hom}(B(x),X)$ is mapped to $X$ via $f$. The counit of the adjunction is given by maps
$$A \rightarrow GF(A)=\coprod\nolimits_{x \in \mathbb{X}} \text{Hom}(B(x),\coprod\nolimits_{a \in A}B(\pi_{A}(a))),$$
for $(A, \pi_{A})$ in $\mathcal{C}_{\mathbb{X}}$, where $a \in A$ is mapped to the inclusion of $B(\pi_{A}(a))$ into $\coprod_{a' \in A}B(\pi_{A}(a'))$.
\end{proof}

\begin{lem}
\label{FReflectsIsomorphisms}
The functor $F$ reflects isomorphisms.
\end{lem}

\begin{proof}
Let $\phi:A \sqcup \{0\} \rightarrow A' \sqcup \{0\}$ be a morphism in $\mathcal{C}_{\mathbb{X}}$, and suppose that $F(\phi):FA \rightarrow FA'$ is an isomorphism. This precisely means that $\phi(a) \neq 0$ for all $a \in A$ and each $B(\pi_{A'}(a'))$ for $a' \in A'$ is the image of some $B(\pi_{A}(a))$ for some $a \in A$. Thus $\phi$ is both monic and epic, hence an isomorphism. Its inverse it just its inverse as a map of pointed sets.
\end{proof}

\begin{lem}
\label{FPreservesEqualisers}
$\mathcal{C}_{\mathbb{X}}$ has, and $F$ preserves, all equalisers.
\end{lem}

\begin{proof}
Suppose we have parallel maps $f,g:A \sqcup \{0\} \rightarrow A' \sqcup \{0\}$ in $\mathcal{C}_{\mathbb{X}}$. Their equaliser is just $\{a \in A \mid f(a)=g(a)\}$. Likewise, the equaliser of $F(f)$ and $F(g)$ in $\mathit{Set}_{\bullet}$ is $\{b \in FA \mid F(f)(b)=F(g)(b)\}$. On each component $B(\pi_{A}(a))$ of $FA$, if $f(a)=g(a) \neq 0$ then they are both the same isomorphism
$$B(\pi_{A}(a)) \xrightarrow{\sim} B(\pi_{A'}(f(a)))=B(\pi_{A'}(g(a))).$$
If $f(a)=0=g(a)$ then $F(f)$ and $F(g)$ are both the zero map. Otherwise $F(f)$ and $F(g)$ must disagree on all non-zero elements of the component. Hence the equaliser of $F(f)$ and $F(g)$ is the union of $B(\pi_{A}(a))$ where $f(a)=g(a)$. This is $F(\{a \in A \mid f(a)=g(a)\})$, and the lemma is proved.
\end{proof}

\begin{theorem}
\label{GeneralClassificationAsCoalgebras}
The comonad $\mathbb{U}=(U=F \circ G, \eta, \mu)$ induces an equivalence of categories $J_{\mathbb{U}} : \mathcal{C}_{\mathbb{X}} \rightarrow \mathit{Set}_{\bullet \mathbb{U}}$ between $\mathcal{C}_{\mathbb{X}}$ and the category of algebras over the comonad $\mathbb{U}$ in $\mathit{Set}_{\bullet}$.
\end{theorem}

\begin{proof}
This follows from Theorem \ref{BarrBeck}, using Proposition \ref{FGAdjunction}, Lemma \ref{FReflectsIsomorphisms} and Lemma \ref{FPreservesEqualisers}.
\end{proof}

\begin{corollary}
\label{CrystalClassificationAsCoalgebras}
Setting $\mathbb{X}=\Phi_{+}$ and $B(\alpha)$ are as previously defined for $\alpha \in \Phi_{+}$, the comonad $\mathbb{U}=(U=F \circ G, \eta, \mu)$ gives an equivalence of categories $J_{\mathbb{U}} : \mathit{Crys}_{\mathfrak{g}} \rightarrow \mathit{Set}_{\bullet \mathbb{U}}$.
\end{corollary}

\begin{proof}
This follows from Theorem \ref{GeneralClassificationAsCoalgebras} and Lemma \ref{CrystalsInGeneralSetting}.
\end{proof}

\begin{remark}
Explicitly, from the proof of Proposition \ref{FGAdjunction}, we see that
$$U=FG: A \mapsto \bigsqcup_{\alpha \in \Phi_+} \bigsqcup_{\substack{f \in \text{Hom}(FB(\alpha),A) \\ f \neq 0}} F(B(\alpha)_{f})$$
with
$$\eta_{B(\alpha)} : B(\alpha) \rightarrow \bigsqcup_{\beta \in \Phi_+} \, \bigsqcup_{\substack{f \in \text{Hom}(FB(\beta),FB(\alpha)) \\ f \neq 0}} B(\beta)_{f},$$
$$b \mapsto (b)_{\text{id}_{FB(\alpha)}} \in B(\alpha)_{\text{id}_{FB(\alpha)}},$$
and
$$\varepsilon_{A} : \bigsqcup_{\alpha \in \Phi_+} \, \bigsqcup_{\substack{f \in \text{Hom}(FB(\alpha),A) \\ f \neq 0}} F(B(\alpha)_{f}) \rightarrow A,$$
$$(b)_{f} \mapsto f(b).$$
So
$$\Delta_{A} : \bigsqcup_{\alpha \in \Phi_+} \bigsqcup_{\substack{f \in \text{Hom}(FB(\alpha),A) \\ f \neq 0}} FB(\alpha)_{f} \rightarrow \bigsqcup_{\beta \in \Phi_+} \, \bigsqcup_{\substack{g \in \text{Hom}(FB(\beta),FG(A))  \\ g \neq 0}} FB(\beta)_{g}$$
maps the copy of $FB(\alpha)$ indexed by $f:FB(\alpha) \rightarrow A$ isomorphically to the copy of $FB(\alpha)$ indexed by $FB(\alpha) \cong FB(\alpha)_{f} \hookrightarrow FG(A)$. From here we can explicitly see the coalgebra structure of each $B(\alpha)$ over $FG$ is given by a map
$$\zeta: F(B(\alpha)) \rightarrow FG(F(B(\alpha)), \, \, \, b \mapsto (b)_{\text{id}_{F(B(\alpha))}}$$
which extends to the coalgebra structure of a general crystal $X= \bigsqcup _{j \in J} B(\beta_{j})$ as follows:
$$\zeta: F(X) \rightarrow FG(F(X)), \, \, \, b \mapsto (b)_{(F(B(\beta_{j})) \hookrightarrow FX)} \, \text{ for }b \in F(B(\beta_{j})).$$
\end{remark}

\begin{corollary}
\label{NoCrystalCoalgebra}
There is no coalgebra in $\mathit{Set}_{\bullet}$ whose category of comodules is equivalent to $\mathit{Crys}_{\mathfrak{g}}$ as categories over $\mathit{Set}_{\bullet}$.
\end{corollary}

\begin{proof}
Suppose there is a coalgebra $C$ in $\mathit{Set}_{\bullet}$ whose category of comodules is equivalent to $\mathit{Crys}_{\mathfrak{g}}$, and suppose this equivalence preserves the forgetful functor to $\mathit{Set}_{\bullet}$. Then the right adjoint to this forgetful functor, $G$, would be isomorphic to $C \otimes -:\mathit{Set}_{\bullet} \rightarrow C \text{-comod} \cong \mathit{Crys}_{\mathfrak{g}}$. Then $U \cong C \otimes -$, as a functor on $\mathit{Set}_{\bullet}$, preserves coproducts. However, by the explicit description of $U$, this is not the case and we reach a contradiction.
\end{proof}

\subsection{Recovering the crystal structure}

Given a pointed set $A$ with a coalgebra structure $(A, \zeta_{A})$ over our comonad $U=FG$, we know from the above that $A$ carries a crystal structure that has been forgotten by the forgetful functor $F$. In fact, there is a way of recovering this crystal structure from the coalgebra structure.

\begin{prop}
We regain the Kashiwara operator $\tilde{f}_i$ (and similarly $\tilde{e}_i$) on a $U$-coalgebra $A$ via the following composition:
$$A \xrightarrow[]{\zeta_{A}} FG(A) \xrightarrow[]{\tilde{f}_i} FG(A) \overset{\varepsilon_{A}}{\longrightarrow} A.$$
We also regain the weight function via
$$A \rightarrow FG(A) \rightarrow \Phi$$
where the last arrow is the map $(b)_{f} \mapsto \text{wt}(b)$.
\end{prop}

\begin{proof}
This follows from the explicit description of the $U$-coaction on a crystal as described in the previous remark.
\end{proof}

\subsection{The monoidal structure of $U$}

Recall that a bialgebra is simultaneously an algebra and a coalgebra where the structure maps are compatible. In the setting of functors, there is no analogous notion of a bimonad. The subtlety comes from the lack of symmetry when composing functors. There is no natural twist $A\circ B \Rightarrow B \circ A$ for functors $A,B$ on a category $\mathcal{C}$, and so, whilst the tensor product of two algebras in a symmetric monoidal category again gives an algebra, the composition of two monads does not naturally give a monad. So, if a functor $T$ on a category is both a monad and a comonad, we cannot simply ask that the comultiplication map $T \Rightarrow TT$ be a morphism of monads. Recall that, for a bialgebra $H$, the categories of comodules of $H$ inherit a monoidal structure. We wish to generalise this property of bialgebras that allows us to encode a monoidal structure on comodules. To generalise this, we recall the definition of a monidal functor. For more on these notions see \cite{Mo}, \cite{BV}, \cite{BLV} and \cite{PMC}.

\begin{defn}
Let $T:\mathcal{C} \rightarrow \mathcal{D}$ be a functor between monoidal categories. $T$ is said to be \emph{monoidal} if we equip $T$ with the data of a natural tranformation
$$\chi _{A,B}:T(A) \otimes T(B) \Rightarrow T(A \otimes B)$$
and a morphism $\mathbb{I} \rightarrow T(\mathbb{I})$, where $\mathbb{I}$ is taken to be the identity of the tensor product, satisfying the diagram
\begin{center}
\begin{tikzpicture}[node distance=6cm, auto]
  \node (A) {$T(A) \otimes (T(B) \otimes T(C))$};
  \node (B) [right=1cm of A] {$(T(A) \otimes T(B)) \otimes T(C)$};
  \node (C) [below=0.5cm of B] {$T(A \otimes B) \otimes T(C)$};
  \node (D) [below=0.5cm of A] {$T(A) \otimes T(B \otimes C)$};
  \node (E) [below=0.5cm of D] {$T(A \otimes (B \otimes C))$};
  \node (F) [below=0.5cm of C] {$T((A \otimes B) \otimes C)$};
  \draw[->] (A) to node {$\sim$} (B);
  \draw[->] (B) to node {$\chi_{A,B} \otimes \text{Id}$} (C);
  \draw[->] (C) to node {$\chi_{A \otimes B, C}$} (F);
  \draw[->] (A) to node {$\text{Id} \otimes \chi_{B,C}$} (D);
  \draw[->] (D) to node {$\chi_{A, B \otimes C}$} (E);
  \draw[->] (E) to node {$\sim$} (F);
\end{tikzpicture},
\end{center}
and such that the compositions
$$T(A) \cong T(A) \otimes \mathbb{I} \rightarrow T(A) \otimes T(\mathbb{I}) \rightarrow T(A \otimes \mathbb{I}) \cong T(A),$$
$$T(A) \cong \mathbb{I} \otimes T(A) \rightarrow T(\mathbb{I}) \otimes T(A) \rightarrow T(\mathbb{I} \otimes A) \cong T(A),$$
are the identity on $T(A)$ for all $A$, $B$ and $C$ in $\mathcal{C}$. We say that $T$ is \emph{strong monoidal} if $\chi_{A,B}$ and $\mathbb{I} \rightarrow T(\mathbb{I})$ are isomorphisms. If $T$ is a monoidal comonad on a monoidal category, we will call it is a \emph{bicomonad} if the diagram
\begin{center}
\begin{tikzpicture}[node distance=6cm, auto]
  \node (A) {$TT(A \otimes B)$};
  \node (B) [left=1.3cm of A] {$T(T(A) \otimes T(B))$};
  \node (C) [left=1.3cm of B] {$TT(A) \otimes TT(B)$};
  \node (D) [above=1cm of A] {$T(A \otimes B)$};
  \node (E) [above=1cm of C] {$T(A) \otimes T(B)$};
  \draw[<-] (A) to node {$T(\chi_{A,B})$} (B);
  \draw[<-] (B) to node {$\chi _{T(A),T(B)}$} (C);
  \draw[<-] (A) to node {$\Delta_{A \otimes B}$} (D);
  \draw[<-] [swap] (D) to node {$\chi_{A,B}$} (E);
  \draw[<-] (C) to node {$\Delta_{A} \otimes \Delta_{B}$} (E);
\end{tikzpicture},
\end{center}
commutes and $\chi_{A,B} \circ \epsilon_{A \otimes B} = \epsilon_{A} \otimes \epsilon_{B}$ as maps $T(A) \otimes T(B) \rightarrow A \otimes B$ for all $A$ and $B$ in $\mathcal{C}$.
\end{defn}

\begin{remark}
For a comonad $\mathbb{U}$, Proposition 1.4 of \cite{Mo} shows that the property of being a bicomonad gives a monoidal structure on the category of coalgebras. The coaction on a tensor product of two coalgebras is given by the composition
$$A \otimes B \rightarrow T(A) \otimes T(B) \rightarrow T(A \otimes B)$$
where the first arrow is given by the respective coactions of $A$ and $B$, and the second given by $\chi$. In fact, Moerdijk proves the following in \cite{Mo}.
\end{remark}

\begin{theorem}[\cite{Mo}]
\label{Moerdijk'sTheorem}
Let $\mathbb{U}= (U, \Delta, \epsilon)$ be a comonad on a monoidal category $\mathcal{C}$. Then monoidal structures on $\mathcal{C}_{\mathbb{U}}$ such that the forgetful functor $F$ to $\mathcal{C}$ is strong monoidal correspond to bicomonad structures on $\mathbb{U}$.
\end{theorem}

\begin{proof}
This is Theorem 7.1 of \cite{Mo}. Suppose we have endowed $\mathcal{C}_{\mathbb{U}}$ with a monoidal structure $(\otimes, \mathbb{I})$ such that $F$ is strong monoidal. Let $G:\mathcal{C} \rightarrow \mathcal{C}_{\mathbb{U}}$, $C \mapsto (UC,\Delta_{C})$ denote the right adjoint to $F$ mapping an object of $\mathcal{C}$ to its free coalgebra. Then $U=FG$ and we obtain $\chi_{A,B}:U(A) \otimes U(B) \rightarrow U(A \otimes B)$ as the image of $\epsilon_{A} \otimes \epsilon_{B}$ under the composition
$$\begin{array}{rcl}
\text{Hom}(FGA \otimes FGB,A \otimes B) &\cong& \text{Hom}(F(GA \otimes GB), A \otimes B)\\
&\cong& \text{Hom}(GA \otimes GB, G(A \otimes B))\\
&\rightarrow& \text{Hom}(F(GA \otimes GB), FG(A \otimes B))\\
&\cong& \text{Hom}(FGA \otimes FGB, FG(A \otimes B)).
\end{array}$$
The morphism $U(\mathbb{I}) \rightarrow \mathbb{I}$ is given by the counit $\epsilon_{\mathbb{I}}$.
\end{proof}

\begin{prop}
The monoidal structure on $\mathbb{U}$ corresponding to the monoidal structure on $\mathit{Crys}_{\mathfrak{g}}$ under the equivalence in Corollary \ref{CrystalClassificationAsCoalgebras} is given as follows.
For each $b \otimes b' \in FB(\alpha)_{f} \otimes FB(\beta)_{g} \subset U(A) \otimes U(B)$ indexed by $f:FB(\alpha) \rightarrow A$ and $g:FB(\beta) \rightarrow B$ there is some $\gamma_{b,b'} \in \Gamma _{\alpha, \beta}$ such that the image $b \cdot b'$ of $b \otimes b'$ in the decomposition $B(\alpha) \otimes B(\beta) \cong \bigsqcup_{\gamma \in \Gamma _{\alpha, \beta}} B(\gamma)$ lies in the component $B(\gamma_{b,b'})$. Then we define
$$\chi_{A,B}:U(A) \otimes U(B) \rightarrow U(A \otimes B)$$
by mapping $b \otimes b'$ to $b \cdot b'$ in the copy of $B(\gamma_{b,b'})$ indexed by the map
$$B(\gamma_{b,b'}) \hookrightarrow \bigsqcup_{\gamma \in \Gamma _{\alpha, \beta}} B(\gamma) \cong B(\alpha) \otimes B(\beta) \xrightarrow{f \otimes g} A \otimes B.$$
We define a map $\mathbb{I} \rightarrow U(\mathbb{I})$, where $\mathbb{I}=\{0,1\}$ is the monoidal unit in $\mathit{Set}_{\bullet}$, by mapping $1$ to $b_{0} \in B(0)$ indexed by the map $FB(0) \xrightarrow{\sim} \mathbb{I}$, $b_{0} \mapsto 1$.
\end{prop}

\begin{proof}
This result follows from the proof of Theorem \ref{Moerdijk'sTheorem}. The image of the map $\varepsilon_{A} \otimes \varepsilon_{B}$ under the isomorphism
$$\begin{array}{rcl}
\text{Hom}(FGA \otimes FGB,A \otimes B) &\cong& \text{Hom}(F(GA \otimes GB), A \otimes B)\\
&\cong& \text{Hom}(GA \otimes GB, G(A \otimes B))
\end{array}$$
is the map taking $b \otimes b' \in B(\alpha)_{f} \otimes B(\beta)_{g} \subset G(A) \otimes G(B)$ indexed by $f:FB(\alpha) \rightarrow A$ and $g:FB(\beta) \rightarrow B$ to $b \cdot b'$ in the copy of the irreducible crystal $B(\gamma_{b,b'}) \subset \bigsqcup_{\gamma \in \Gamma _{\alpha, \beta}} B(\gamma) \cong B(\alpha) \otimes B(\beta)$ in which it lies indexed by the restriction of $f \otimes g$ to this component.
\end{proof}

\section{A crystal bialgebra}
\label{WorkingOverZ}

\subsection{The bialgebra $\mathbb{B}$}

\begin{defn}
For $\alpha \in \Phi_{+}$ let $\mathbb{B}(\alpha)$ be the free abelian group $\mathbb{Z}B(\alpha)$. Let $\iota_{\alpha}$ and $\epsilon_{\alpha}$ denote the homomorphisms
$$\begin{array}{rcll}
\iota_{\alpha}&:&\mathbb{Z} \rightarrow \mathbb{B}(-\alpha) \otimes \mathbb{B}(\alpha),& 1 \mapsto \sum_{b \in B(\alpha)} b^{\vee} \otimes b,\\
\varepsilon_{\alpha}&:& \mathbb{B}(\alpha) \otimes \mathbb{B}(-\alpha) \rightarrow \mathbb{Z},& b \otimes b' \mapsto \delta_{b,b'^{\vee}},
\end{array}$$
called the \emph{coevaluation} and \emph{evaluation} respectively.
\end{defn}

\begin{prop}
The composition
$$\mathbb{B}(\alpha) \cong \mathbb{B}(\alpha)\otimes \mathbb{Z} \overset{\text{id} \otimes \iota_{\alpha}}{\longrightarrow} \mathbb{B}(\alpha) \otimes \mathbb{B}(-\alpha) \otimes \mathbb{B}(\alpha) \overset{\varepsilon_{\alpha} \otimes \text{id}}{\longrightarrow} \mathbb{B}(\alpha)$$
agrees with the identity. Hence $\mathbb{B}(-\alpha)$ is dual to $\mathbb{B}(\alpha)$ in the category of free abelian groups.
\end{prop}

\begin{proof}
This follows since the image of $b \otimes b' \in B(\alpha) \otimes B(-\alpha)$ under this composition is $\sum_{d \in B(\alpha)} \delta_{b,d} d \otimes b' = b \otimes b'$.
\end{proof}

\begin{defn}
Let $\mathbb{B}$ denote the free abelian group on the crystal $\mathcal{B}$,
$$\mathbb{B} := \mathbb{Z}\mathcal{B} = \oplus_{\alpha \in \Phi_{+}} \mathbb{B}(\alpha) \otimes \mathbb{B}(-\alpha).$$
Let $\Delta : \mathbb{B} \rightarrow \mathbb{B} \otimes \mathbb{B}$ denote the homomorphism defined on each summand $\mathbb{B}(\alpha) \otimes \mathbb{B}(-\alpha)\cong \mathbb{B}(\alpha) \otimes \mathbb{Z} \otimes \mathbb{B}(-\alpha)$ by $\text{id} \otimes \iota_{\alpha} \otimes \text{id}$, let $\varepsilon:\mathbb{B} \rightarrow \mathbb{Z}$ be the sum of the maps $\varepsilon_{\alpha}$. Let $\mu: \mathbb{B} \otimes \mathbb{B} \rightarrow \mathbb{B}$ and $\eta:\mathbb{Z} \cong \mathbb{B}(0) \otimes \mathbb{B}(-0) \hookrightarrow \mathbb{B}$ be the homomorphisms induced by the multiplication and unit in Definition \ref{Multiplication on crystal B}.
\end{defn}

\begin{prop} The maps $\eta$, $\mu$, $\varepsilon$ and $\Delta$ make $\mathbb{B}$ a $\mathbb{Z}$-bialgebra.
\end{prop}

\begin{proof}
The fact that $(\mathbb{B},\mu,\eta)$ forms a $\mathbb{Z}$-algebra follows from Proposition \ref{Crystal Algebra B}. Both $(\Delta \otimes \text{Id}) \circ \Delta$ and $(\text{Id} \otimes \Delta) \circ \Delta$ can be identified with the map $\text{Id} \otimes \iota_{\alpha} \otimes \iota_{\alpha} \otimes \text{Id}$ on
$$\mathbb{B}(\alpha) \otimes \mathbb{B}(-\alpha) \cong \mathbb{B}(\alpha) \otimes \mathbb{Z} \otimes \mathbb{Z} \otimes \mathbb{B}(-\alpha),$$
so the comultiplication is coassociative. Furthermore,
$$\begin{array}{rcl}
\sum_{d \in B(\alpha)} \varepsilon(b \otimes d^{\vee}) \ d \otimes b' &=& \sum_{d \in B(\alpha)} \delta_{b,d} \ d \otimes b'\\
&=& b \otimes b'\\
&=& \sum_{d \in B(\alpha)} \delta_{d,b'^{\vee}} \ b \otimes d^{\vee}\\
&=& \sum_{d \in B(\alpha)} \varepsilon(d \otimes b') \ b \otimes d^{\vee},
\end{array}$$
so $\varepsilon$ acts as a counit. It remains to verify that $\Delta$ and $\varepsilon$ are $\mathbb{Z}$-algebra homomorphisms. Let $b,b' \in B(\alpha)$ and $d,d' \in B(\beta)$. If $(b \cdot d)$ and $(d' \cdot b')^{\vee}$ lie in different irreducible crystals then $\Delta \circ \mu ((b \otimes b') \otimes (d \otimes d'))=0$. Also,
$$(b \otimes b') \otimes (d \otimes d')\xmapsto{\mu_{\mathbb{B} \otimes \mathbb{B}} \circ \Delta_{\mathbb{B} \otimes \mathbb{B}}}\sum_{\substack{b'' \in B(\alpha) \\ d'' \in B(\beta)}} \mu(b \otimes b''^{\vee} \otimes  d \otimes d''^{\vee}) \otimes \mu(b'' \otimes b' \otimes d'' \otimes d'),$$
the nonzero terms of which only occur when both $b\otimes d$ and $(d''^{\vee} \otimes b''^{\vee})^{\vee}=b'' \otimes d''$ lie in the same component, and $b'' \otimes d''$ and $(d' \otimes b')^{\vee}$ lie in the same component. Since this never occurs, this sum must also be zero. Now suppose that $(b \otimes d)$ and $(d' \otimes b')^{\vee}$ do lie in the same irreducible component, $B(\gamma)$ say. In this case we have
$$(b \otimes b') \otimes (d \otimes d') \overset{\mu}{\mapsto} (b \cdot d) \otimes (d' \cdot b') \overset{\Delta}{\mapsto} \sum_{c \in B(\gamma)} ((b \cdot d) \otimes c^{\vee}) \otimes (c \otimes (d' \cdot b'))$$
whilst
$$\begin{array}{rcl}
(b \otimes b') \otimes (d \otimes d') &\xmapsto{\Delta \otimes \Delta}& \sum_{\substack{b'' \in B(\alpha)\\ d'' \in B(\beta)}} b \otimes b''^{\vee} \otimes b'' \otimes b' \otimes d \otimes d''^{\vee} \otimes d'' \otimes d'\\
&\xmapsto{\mu_{\mathbb{B} \otimes \mathbb{B}}}& \sum_{\substack{b'' \in B(\alpha)\\ d'' \in B(\beta)}} \mu(b \otimes b''^{\vee} \otimes  d \otimes d''^{\vee}) \otimes \mu(b'' \otimes b' \otimes d'' \otimes d')\\
&=& \sum_{\substack{b'' \in B(\alpha) \\ d'' \in B(\beta) \\ b'' \cdot d'' \in B(\gamma)}}(b \cdot d) \otimes (b'' \cdot d'')^{\vee} \otimes (b'' \cdot d'') \otimes (d' \cdot b')\\
&=& \sum_{c \in B(\gamma)} (b \cdot d) \otimes c^{\vee} \otimes c \otimes (d' \cdot b').
\end{array}$$
So $\Delta$ is an algebra homomorphism. Similarly, if we say $b \otimes d$ and $b'^{\vee} \otimes d'^{\vee}$ lie in the same component,
$$\begin{array}{rcccl}
\epsilon((b \otimes b') \cdot (d \otimes d')) &=& \epsilon(b \cdot d \otimes d' \cdot b') &=& \delta_{(b \cdot d)^{\vee}, d' \cdot b'}\\
&=& \delta_{d^{\vee} \cdot b^{\vee}, d' \cdot b'}&=& \delta_{d^{\vee},d'} \delta_{b^{\vee},b'}\\
&=& \epsilon(b \otimes b') \epsilon (d \otimes d').
\end{array}$$
since $d^{\vee} \cdot b^{\vee} = d' \cdot b'$ if and only if $d^{\vee} = d'$ and $b^{\vee}=b'$. The case when they do not lie in the same component is trivial, hence $\epsilon$ is an algebra homomorphism too. Thus we have our result.
\end{proof}

\begin{defn}
Let $\mathbb{B}_{\lambda} = \text{Span}_{\mathbb{Z}} \{ b \otimes b' \in \mathcal{B} \mid \text{wt}(b)+\text{wt}(b')= \lambda \}$ for $\lambda \in \Phi$.
\end{defn}

\begin{prop}
We have $\mathbb{B} = \bigoplus_{\lambda \in \Phi} \mathbb{B}_{\lambda}$ with $\mathbb{B}_{\lambda} \cdot \mathbb{B}_{\lambda'} \subset \mathbb{B}_{\lambda + \lambda'}$ and $\Delta(\mathbb{B}_{\lambda}) \subset \bigoplus_{\lambda = \lambda' + \lambda''} \mathbb{B}_{\lambda'} \otimes \mathbb{B}_{\lambda''}$, so $\mathbb{B}$ is a graded bialgebra.
\end{prop}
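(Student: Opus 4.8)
The plan is to show that all three assertions reduce to a single structural observation: every map defining the (co)algebra structure on $\mathbb{B}$ is assembled from crystal morphisms, which preserve the weight function, together with the weight-reversal identity $\text{wt}(b^{\vee}) = -\text{wt}(b)$ built into the dual crystal $B(-\alpha) = B(\alpha)^{\vee}$.

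First I would establish the direct sum decomposition, which is purely formal. The elements $b \otimes b'$ with $b \in B(\alpha)$, $b' \in B(-\alpha)$ and $\alpha \in \Phi_+$ form a $\mathbb{Z}$-basis of $\mathbb{B}$, and each such basis vector carries the single weight $\text{wt}(b) + \text{wt}(b') \in \Phi$. Since every basis element lies in exactly one weight class, the subgroups $\mathbb{B}_{\lambda}$ are spanned by disjoint subsets of this basis, so $\mathbb{B} = \bigoplus_{\lambda \in \Phi} \mathbb{B}_{\lambda}$.

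The key input for the remaining two compatibilities is that the isomorphism $B(\alpha) \otimes B(\beta) \cong \bigsqcup_{\gamma} B(\gamma)$ decomposing a tensor product into irreducibles is an isomorphism of crystals, so by the defining conditions of a crystal morphism it preserves $\text{wt}$. Combined with the tensor-product weight formula $\text{wt}(b_1 \otimes b_2) = \text{wt}(b_1) + \text{wt}(b_2)$, this yields $\text{wt}(b \cdot d) = \text{wt}(b) + \text{wt}(d)$ for the product element $b \cdot d$ appearing in the definition of $\mu$. For multiplicative compatibility I would take $b \otimes b' \in \mathbb{B}_{\lambda}$ and $d \otimes d' \in \mathbb{B}_{\lambda'}$ and compute directly that the product $(b \cdot d) \otimes (d' \cdot b')$ has weight $\text{wt}(b) + \text{wt}(d) + \text{wt}(d') + \text{wt}(b') = \lambda + \lambda'$; when the product vanishes the containment is trivial. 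Hence $\mathbb{B}_{\lambda} \cdot \mathbb{B}_{\lambda'} \subseteq \mathbb{B}_{\lambda + \lambda'}$.

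For comultiplicative compatibility I would use the explicit formula $\Delta(b \otimes b') = \sum_{c \in B(\gamma)} (b \otimes c^{\vee}) \otimes (c \otimes b')$ for $b \otimes b' \in B(\gamma) \otimes B(-\gamma)$ of weight $\lambda$. Each summand is a tensor of two elements of weights $\text{wt}(b) + \text{wt}(c^{\vee}) = \text{wt}(b) - \text{wt}(c)$ and $\text{wt}(c) + \text{wt}(b')$, whose sum is again $\text{wt}(b) + \text{wt}(b') = \lambda$; setting $\lambda' = \text{wt}(b) - \text{wt}(c)$ and $\lambda'' = \text{wt}(c) + \text{wt}(b')$ places every summand in $\mathbb{B}_{\lambda'} \otimes \mathbb{B}_{\lambda''}$ with $\lambda' + \lambda'' = \lambda$, giving the required inclusion. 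I would finish by recording that the unit $\eta$ lands in $\mathbb{B}(0) \otimes \mathbb{B}(-0) \subseteq \mathbb{B}_{0}$ and that the counit $\epsilon$ is supported in degree $0$, since $\epsilon(b \otimes b') = \delta_{b^{\vee}, b'}$ is nonzero only when $\text{wt}(b') = -\text{wt}(b)$. There is no genuine obstacle here: once weight-preservation of the decomposition isomorphisms is noted, everything is weight bookkeeping, and the only point deserving an explicit line of justification is precisely that those isomorphisms respect $\text{wt}$.
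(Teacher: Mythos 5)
Your proof is correct, and since the paper states this proposition without any proof, your weight-bookkeeping argument supplies exactly the routine verification the author evidently had in mind: the decomposition isomorphisms are crystal morphisms and hence preserve $\text{wt}$, the tensor product adds weights, and $\text{wt}(b^{\vee}) = -\text{wt}(b)$, from which the gradedness of $\mu$, $\Delta$, $\eta$ and $\epsilon$ all follow by direct computation on the basis. Nothing is missing.
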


\begin{proof}
Let $b \otimes b' \in B(\alpha) \otimes B(-\alpha)$ and $d \otimes d' \in B(\beta) \otimes B(-\beta)$. Then their product is either $0$ or $(b \cdot d) \otimes (d' \cdot b')$, and
$$\text{wt}(b \cdot d) + \text{wt}(b' \cdot d')=\text{wt}(b)+\text{wt}(d) + \text{wt}(b')+\text{wt}(d').$$
So $\mathbb{B}_{\lambda} \cdot \mathbb{B}_{\lambda'} \subset \mathbb{B}_{\lambda + \lambda'}$. Also, $\Delta(b \otimes b')=\sum_{d \in B(\alpha)} b \otimes d \otimes d^{\vee} \otimes b'$ and
$$
\begin{array}{rcl}
\text{wt}(b)+\text{wt}(d)+\text{wt}(d^{\vee})+\text{wt}(b')&=&\text{wt}(b)+\text{wt}(d)-\text{wt}(d)+\text{wt}(b')\\
&=&\text{wt}(b)+\text{wt}(b').
\end{array}$$
So $\Delta(\mathbb{B}_{\lambda}) \subset \bigoplus_{\lambda = \lambda' + \lambda''} \mathbb{B}_{\lambda'} \otimes \mathbb{B}_{\lambda''}$.
\end{proof}

\begin{prop}
If we take a basis of $\Phi$ of fundamental weights $\{\Lambda_{i} \mid i \in I\}$ then $\mathbb{B}$ is generated as an algebra by the $B(\Lambda_{i}) \otimes B(-\Lambda_{i})$ for $i \in I$.
\end{prop}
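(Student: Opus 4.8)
The plan is to exploit the weight grading of $\mathbb{B}$ established above, together with an induction on the height of dominant weights, reducing the whole statement to the explicit multiplication formula for $\mathbb{B}$.

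First I would record that, since $\{\Lambda_i\}$ is the basis of $\Phi$ dual to the simple coroots, every $\alpha \in \Phi_+$ can be written as $\alpha = \sum_{i} n_i \Lambda_i$ with $n_i = \lambda_i(\alpha) \in \mathbb{Z}_{\geq 0}$; put $\mathrm{ht}(\alpha) = \sum_i n_i$. Because $\mathbb{B} = \bigoplus_{\alpha} \mathbb{B}(\alpha) \otimes \mathbb{B}(-\alpha)$ and the subalgebra $\mathcal{A}$ generated by the $B(\Lambda_i)\otimes B(-\Lambda_i)$ is a graded $\mathbb{Z}$-submodule, it suffices to prove $\mathbb{B}(\alpha) \otimes \mathbb{B}(-\alpha) \subseteq \mathcal{A}$ for every $\alpha \in \Phi_+$, by induction on $\mathrm{ht}(\alpha)$. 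The case $\mathrm{ht}(\alpha) = 0$ is handled by noting $\mathbb{B}(0)\otimes\mathbb{B}(-0) = \mathbb{Z}$ is spanned by the unit $\eta(1)$, and the case $\mathrm{ht}(\alpha) = 1$ is immediate since then $\alpha = \Lambda_i$ is a generator.

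For the inductive step I would choose $i$ with $n_i \geq 1$ and set $\beta = \alpha - \Lambda_i \in \Phi_+$, so that $\mathrm{ht}(\beta) = \mathrm{ht}(\alpha) - 1$ and $\mathbb{B}(\beta)\otimes\mathbb{B}(-\beta) \subseteq \mathcal{A}$ by hypothesis. The key point is that, by Kashiwara's decomposition of the tensor product, $B(\alpha) = B(\beta + \Lambda_i)$ occurs as the multiplicity-one top component of $B(\beta) \otimes B(\Lambda_i)$, and dually $B(-\alpha)$ is the top component of $B(-\Lambda_i) \otimes B(-\beta) \cong (B(\beta)\otimes B(\Lambda_i))^\vee$. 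Since this decomposition is an isomorphism of crystals, the assignment $b \otimes d \mapsto b \cdot d$ restricts to a bijection from the top-component subcrystal onto $B(\alpha)$; hence for each $c \in B(\alpha)$ there are unique $b_0 \in B(\beta)$, $d_0 \in B(\Lambda_i)$ with $b_0 \cdot d_0 = c$, and likewise for each $e^\vee \in B(-\alpha)$ unique $d_0' \in B(-\Lambda_i)$, $b_0' \in B(-\beta)$ with $d_0' \cdot b_0' = e^\vee$.

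Feeding these into the multiplication formula established above gives $(b_0 \otimes b_0') \cdot (d_0 \otimes d_0') = (b_0 \cdot d_0) \otimes (d_0' \cdot b_0') = c \otimes e^\vee$, since $b_0 \cdot d_0 \in B(\alpha)$ and $(d_0' \cdot b_0')^\vee = e \in B(\alpha)$ lie in the same top component, so no term is annihilated. As $b_0 \otimes b_0' \in \mathbb{B}(\beta)\otimes\mathbb{B}(-\beta) \subseteq \mathcal{A}$ and $d_0 \otimes d_0'$ is a generator, this exhibits every basis element $c \otimes e^\vee$ of $\mathbb{B}(\alpha)\otimes\mathbb{B}(-\alpha)$ as a product in $\mathcal{A}$, closing the induction. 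The only real obstacle is the top-component bijection: one must verify that the top summand $B(\alpha)$ of $B(\beta)\otimes B(\Lambda_i)$ is multiplicity-free and that the matching of components in the definition of $\mu$ pairs it with precisely the top summand $B(-\alpha)$ on the dual side, so that the chosen product lands entirely in the $\alpha$-graded piece rather than leaking into lower components.
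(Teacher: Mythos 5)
Your proof is correct and is essentially the paper's argument: the paper simply iterates your inductive step all at once, using the surjection $B(\Lambda_{1})^{\otimes n_{1}} \otimes \cdots \otimes B(\Lambda_{k})^{\otimes n_{k}} \twoheadrightarrow B(\sum_{i} n_{i}\Lambda_{i})$ onto the Cartan component to hit every basis element of $\mathbb{B}(\alpha)\otimes\mathbb{B}(-\alpha)$ by a product of generators. Your height induction is a legitimate unrolling of this, and the ``obstacle'' you flag is genuinely resolved by the paper's setup: the Cartan component is multiplicity-free by Kashiwara's decomposition (it corresponds to the unique element $u_{\Lambda_i}$ of weight $\Lambda_i$), and the pairing of components in the definition of $\mu$ is by construction the one induced by $(B(\beta)\otimes B(\Lambda_{i}))^{\vee} \cong B(-\Lambda_{i})\otimes B(-\beta)$, which matches top component to top component.
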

\begin{proof}
For each $\sum_{i} n_{i} \Lambda_{i} \in \Phi_+$ the surjection
$$B(\Lambda_{1})^{\otimes n_{1}} \otimes ... \otimes B(\Lambda_{k})^{\otimes n_{k}} \rightarrow B(\sum_{i} n_{i} \Lambda_{i})$$
gives a surjection
$$\bigotimes_{i=1}^{k}(B(\Lambda_{i}) \otimes B(-\Lambda_{i}))^{\otimes n_{i}} \rightarrow B(\sum_{i} n_{i} \Lambda_{i}) \otimes B(-\sum_{i} n_{i} \Lambda_{i})$$
onto a basis of $\mathbb{B}(\sum_{i} n_{i} \Lambda_{i}) \otimes \mathbb{B}(-\sum_{i} n_{i} \Lambda_{i})$.
\end{proof}

\begin{prop}
\label{BPresentationForsl2}
Suppose $\mathfrak{g}=\mathfrak{sl}_{2}$. Then $\mathbb{B}$ is the quotient of the free algebra $\mathbb{Z}\langle a,b,c,d \rangle$ by the relations
$$cb=bc=db=dc=ba=ca = 0, \, \, \, da=1$$
with comultiplication
$$\begin{array}{rclrcl}
\Delta(a)&=&a \otimes a + b \otimes c,& \Delta(b)&=&a \otimes b + b \otimes d,\\
\Delta(c)&=& c \otimes a + d \otimes c,& \Delta(d)&=&c \otimes b + d \otimes d.
\end{array}$$
\end{prop}

\begin{proof}
In the case of $\mathfrak{sl}_{2}$, the fundamental weight is $1 \in \mathbb{N}$, and $B(1)$ has crystal graph $u_{0}^{(1)} \rightarrow u_{1}^{(1)}$. So we have four generators in $B(1) \otimes B(-1)$, namely
$$\begin{array}{rclrcl}
a&=&u_{1}^{(1)} \otimes (u_{1}^{(1)})^{\vee},&b&=&u_{0}^{(1)} \otimes (u_{1}^{(1)})^{\vee},\\
c&=&u_{1}^{(1)} \otimes (u_{0}^{(1)})^{\vee},&d&=&u_{0}^{(1)} \otimes (u_{0}^{(1)})^{\vee}.
\end{array}$$
These generators have the given comultiplication. It follows from Definiton \ref{CrystalTensor} and the diagram in Example \ref{CrystalTensorsl2} that
$$u_{p}^{(n)} \cdot u_{q}^{(m)} =
\begin{cases}
u_{p}^{(m+n-2q)} \in B(m+n-2q) & \text{ if } p+q \leq n, \\
u_{2p+q-n}^{(m-n-2p)} \in B(m-n-2p) & \text{ if } p+q < n.
\end{cases}$$
Then $B(1)^{\otimes n} \twoheadrightarrow B(n)$ maps $(u_{1}^{(1)})^{\otimes k} \otimes (u_{0}^{(1)})^{\otimes n-k}$ to $u_{k}^{(n)}$. From this it follows that
$$\begin{array}{rcl}
u_{k}^{(n)} \otimes (u_{l}^{(n)})^{\vee} &=&
\begin{cases} 
 a^{l}c^{k-l}d^{n-k} & \text{if } k \geq l, \\
 a^{k}b^{l-k}d^{n-l} & \text{if } k \leq l.
\end{cases}
\end{array}$$
Furthermore, the multiplication in $\mathcal{B}$ can be computed as
$$(u_{p}^{(n)} \otimes (u_{q}^{(n)})^{\vee}) \cdot (u_{r}^{(m)} \otimes (u_{s}^{(m)})^{\vee}) =
\begin{cases}
u_{p}^{(m+n-2r)} \otimes (u_{q}^{(m+n-2r)})^{\vee} & \substack{\text{ if } p+r \leq n,\\ q+s \neq n,\\ r=s,} \\
u_{p}^{(m+n-2r)} \otimes (u_{2q+s-n}^{(m+n-2r)})^{\vee} & \substack{\text{ if } p+r \leq n,\\ q+s > n,\\ r=n+q,} \\
u_{2p+r-n}^{(m-n-2p)} \otimes (u_{2q+s-n}^{(m-n-2p)})^{\vee} & \substack{\text{ if } p+r > n,\\ q+s > n,\\ p=q,} \\
u_{2p+r-n}^{(m-n-2p)} \otimes (u_{q}^{(m-n-2p)})^{\vee} & \substack{\text{ if } p+r > n,\\ q+s \neq n,\\ n+p=s,} \\
0 & \text{otherwise}.
\end{cases}$$
Rewriting this in terms of the generators $a,b,c$ and $d$ this becomes
$$
\begin{array}{rcl}
(a^{i}c^{j}d^{k})\cdot(a^{r}c^{s}d^{t})&=&
\begin{cases}
a^{i}c^{j}d^{t+k-r} & \text{ if } s=0, r \leq k\\
a^{i+r-k}c^{s}d^{t} & \text{ if } j=0, r \geq k\\
a^{i}c^{j+s}d^{t} & \text{ if } r = k\\
0 & \text{ otherwise,}
\end{cases}\\

(a^{i}c^{j}d^{k})\cdot(a^{r}b^{s}d^{t})&=&
\begin{cases}
a^{i}c^{j}d^{t+k-r} & \text{ if } s=0, r \leq k\\
a^{i+r-k}b^{s}d^{t} & \text{ if } j=0, r \geq k\\
0 & \text{ otherwise,}
\end{cases}\\

(a^{i}b^{j}d^{k})\cdot(a^{r}c^{s}d^{t})&=&
\begin{cases}
a^{i}b^{j}d^{t+k-r} & \text{ if } s=0, r \leq k\\
a^{i+r-k}c^{s}d^{t} & \text{ if } j=0, r \geq k\\
0 & \text{ otherwise,}
\end{cases}\\

(a^{i}b^{j}d^{k})\cdot(a^{r}b^{s}d^{t})&=&
\begin{cases}
a^{i}b^{j}d^{t+k-r} & \text{ if } s=0, r \leq k\\
a^{i+r-k}b^{s}d^{t} & \text{ if } j=0, r \geq k\\
a^{i}b^{j+s}d^{t} & \text{ if } r = k\\
0 & \text{ otherwise.}
\end{cases}
\end{array}$$
This shows that the multiplication is completely determined by the given relations.
\end{proof}

\begin{remark}
The presentation above for $\mathbb{B}$ is closely related to a presentation of the quantum coordinate ring, which we discuss at the end of the paper.
\end{remark}

\subsection{The comodules of $\mathbb{B}$}

\begin{defn}
\label{Functor Crys to B-Comod}
For each $\alpha \in \Phi_+$ we can give $\mathbb{B}(\alpha)$ a $\mathbb{B}$-comodule structure via the following map:
$$\mathbb{B}(\alpha) \cong \mathbb{B}(\alpha) \otimes \mathbb{Z} \rightarrow \mathbb{B}(\alpha) \otimes \mathbb{B}(-\alpha) \otimes \mathbb{B}(\alpha) \hookrightarrow \mathbb{B} \otimes \mathbb{B}(\alpha)$$
$$b \mapsto \sum_{b' \in B(\alpha)} b \otimes b'^{\vee} \otimes b'.$$
This induces a functor $\mathit{Crys}_{\mathfrak{g}} \rightarrow \mathbb{B}\text{-comod}$.
\end{defn}

\begin{defn}
\label{DefinitionAbb'alpha}
Let $M$ be a $\mathbb{B}$-comodule. For each $\alpha \in \Phi_{+}$ and each $b,b' \in B(\alpha)$ let us denote by $A_{b,b'}^{\alpha}$ the $\mathbb{Z}$-linear endomorphism of $M$ defined uniquely by the property that
$$\Delta_{M}(m)=\sum\nolimits_{\substack{\alpha \in \Phi_{+} \\ b,b' \in B(\alpha)}} b \otimes b'^{\vee} \otimes A_{b,b'}^{\alpha}(m) \quad \text{ for all }m \in M.$$
Let us denote by $M_{b}^{\alpha}$ the image of $M$ under $A_{b,b}^{\alpha}$ for $b \in B(\alpha)$, $\alpha \in \Phi_{+}$, and let $M^{\alpha}=\sum_{b \in B(\alpha)}M_{b}^{\alpha}$.
\end{defn}

\begin{lem}
With notation as in Definition \ref{DefinitionAbb'alpha},
$$A^{\beta}_{d,d'} A^{\alpha}_{b,b'} = \delta_{\alpha, \beta} \delta_{b',d} A^{\alpha}_{b,d'}, \, \, \sum_{\alpha \in \Phi_+} \sum_{b \in B(\alpha)} A^{\alpha}_{b,b} = \text{Id}_{M},$$
as automorphisms of $M$ for all $\alpha, \beta \in \Phi_{+}$, $b,b' \in B(\alpha)$, $d,d' \in B(\beta)$. This latter relation makes sense since, for each $m \in M$, $A_{b,b'}^{\alpha}(m)=0$ for all but finitely many $b,b',\alpha$. Hence
$$M=\bigoplus\nolimits_{\alpha \in \Phi_{+}} M^{\alpha}=\bigoplus\nolimits_{\substack{\alpha \in \Phi_{+} \\ b \in B(\alpha)}} M_{b}^{\alpha}$$
and $A_{b,b'}^{\alpha}$ restrict to isomorphisms $M_{b}^{\alpha} \rightarrow M_{b'}^{\alpha}$.
\end{lem}

\begin{proof}
Since $M$ is a comodule, we have
$$\sum_{\alpha \in \Phi_+} \sum_{b,b' \in B(\alpha)} \sum_{\beta \in \Phi} \sum_{d,d' \in B(\beta)} b \otimes b'^{\vee} \otimes d \otimes d'^{\vee} \otimes A^{\beta}_{d,d'} A^{\alpha}_{b,b'}(m)$$
$$= \sum_{\alpha \in \Phi_+} \sum_{b,b' \in B(\alpha)} \sum_{d \in B(\alpha)} b \otimes d \otimes d^{\vee} \otimes b'^{\vee} \otimes A^{\alpha}_{b,b'}(m)$$
and
$$m = \sum_{\alpha \in \Phi_+} \sum_{b \in B(\alpha)}  A^{\alpha}_{b,b}(m)$$
for all $m \in M$, from which the relations follow. These imply that $A_{b,b}^{\alpha}$ form a set of perpendicular idempotents, which give the direct sum decomposition. Also, $A_{b',b}^{\alpha}A_{b,b'}^{\alpha}=A_{b,b}^{\alpha}$, which is the identity on $M_{b}^{\alpha}$, and if $m=A_{b,b}^{\alpha}(m) \in M_{b}^{\alpha}$ then $A_{b,b'}(m)=A_{b',b'}A_{b,b'}(m) \in M_{b'}^{\alpha}$.
\end{proof}

\begin{defn}
Let us denote by $\mathbb{B}\text{-comod}^{\text{free}}$ the category whose objects are $\mathbb{B}$ comodules $M$ such that each $M_{b}^{\alpha}$ is a free $\mathbb{Z}$-modules.
\end{defn}

\begin{lem}
For $M$ in $\mathbb{B}\text{-comod}^{\text{free}}$ and $\alpha \in \Phi_{+}$ we may endow the pointed set
$$\mathcal{C}(M^{\alpha}):=\left(\bigsqcup_{b \in B(\alpha)} M_{b}^{\alpha} \setminus \{0\}\right) \sqcup \{0\}$$
with the structure of a crystal by setting $\tilde{f}_{i}(m)$ for $m \in M_{b}^{\alpha}$ to be $\tilde{f}_{i}(m)=A^{\alpha}_{\tilde{f}b,b}(m)$ if $\tilde{f}_{i}b \neq 0$ and $\tilde{f}_{i}(m)=0$ otherwise, and likewise for $\tilde{e}_{i}$. Hence we may endow the pointed set $\mathcal{C}(M):= \bigsqcup_{\alpha \in \Phi_{+}}\mathcal{C}(M^{\alpha})$ with a crystal structure. Furthermore, $\mathcal{C}$ gives a functor $\mathbb{B}\text{-comod}^{\text{free}} \rightarrow \mathit{Crys}_{\mathfrak{g}}$ by restricting a morphism $M \rightarrow M'$ in $\mathbb{B}$-comod to the components
$$\bigsqcup_{b \in B(\alpha)} M_{b}^{\alpha} \setminus \{0\} \rightarrow \bigsqcup_{b \in B(\alpha)} M_{b}^{\alpha} \setminus \{0\}.$$
\end{lem}

\begin{proof}
The fact that we indeed have a crystal structure on $\mathcal{C}(M^{\alpha})$ follows from the observation that we may identify $\bigsqcup_{M^{\alpha}_{b_{\alpha}} \setminus \{0\}} B(\alpha) \cong \mathcal{C}(M^{\alpha})$ under the mapping that takes $b$ in the copy of $B(\alpha)$ indexed by a nonzero $m \in M_{b_{\alpha}}^{\alpha}
$ to the nonzero $A_{b_{\alpha},b}^{\alpha}(m) \in M_{b}^{\alpha}$. Since comodule homomorphisms commute with the operators $A_{b,b'}^{\alpha}$ it follows that the restriction of such a homomorphism gives a morphism of crystals.
\end{proof}

\begin{theorem}
\label{B-Comodules are free on crystals}
The functor $\mathit{Crys}_{\mathfrak{g}} \rightarrow \mathbb{B}\text{-comod}^{\text{free}}$ described in Definition \ref{Functor Crys to B-Comod} is essentially surjective. Furthermore it has a right adjoint
$$\mathcal{C}:\mathbb{B}\text{-comod}^{\text{free}} \rightarrow \mathit{Crys}_{\mathfrak{g}}.$$
\end{theorem}

\begin{proof}
We first prove essential surjectivity. Let $M$ be in $\mathbb{B}\text{-comod}^{\text{free}}$, and let $X^{\alpha}_{b_{\alpha}}$ be a free basis of $M_{b_{\alpha}}^{\alpha}$ for each $\alpha \in \Phi_{+}$. Given $\alpha \in \Phi_{+}$ and $b \in B(\alpha)$ let $X_{b}^{\alpha}=A_{b_{\alpha},b}^{\alpha}X_{b_{\alpha}}^{\alpha}$, which is a free basis of $M_{b}^{\alpha}$. We may endow
$$X:=\left(\bigsqcup\nolimits_{\alpha \in \Phi_{+}} \bigsqcup\nolimits_{b \in B(\alpha)} X_{b}^{\alpha}\right) \sqcup \{0\}$$
with a crystal structure by viewing it as a subset of $\mathcal{C}(M)$ closed under the action of the Kashiwara operators $\tilde{f}_{i}$ and $\tilde{e}_{i}$. Under the identification of $\mathcal{C}(M)$ with $\bigsqcup_{\alpha \in \phi_{+}}\bigsqcup_{M^{\alpha}_{b_{\alpha}} \setminus \{0\}} B(\alpha)$, $X$ corresponds to the disjoint union of copies of $B(\alpha)$ indexed over elements of $X^{\alpha}_{b_{\alpha}}$. Its image in $\mathbb{B}\text{-comod}^{\text{free}}$ under the functor in Definition \ref{Functor Crys to B-Comod} is the free abelian group $M$ with the $\mathbb{B}$-coaction
$$x \mapsto \sum_{b \in B(\alpha)} b_{\alpha} \otimes b^{\vee} \otimes A_{b_{\alpha},b}(x)$$
for $x \in X_{b_{\alpha}}^{\alpha}$. This is just the usual coaction on $M_{b_{\alpha}}^{\alpha} \subset M$, which generates $M$ as a comodule, hence these two coactions must agree on $M$. So $M$ is the image of $X$ under the functor in Definition \ref{Functor Crys to B-Comod}.
\\

We now prove the adjuction. It is enough to exhibit a natural isomorphism
$$\text{Hom}(\mathbb{B}(\alpha),M) \cong \text{Hom}(B(\alpha),\mathcal{C}(M))$$
for each $\alpha \in \Phi_{+}$. First, note that a morphism $f$ of $\mathbb{B}$ comodules $f:\mathbb{B}(\alpha) \rightarrow M$ commutes with the operators $A_{b,b'}^{\alpha}$. Thus $f$ maps $\mathbb{B}(\alpha)_{b}^{\alpha}$ to $M_{b}^{\alpha}$. In fact, $f$ is entirely determined by the restriction
$$f_{b_{\alpha}}:\mathbb{Z} \cong \mathbb{B}(\alpha)_{b_{\alpha}}^{\alpha} \rightarrow M_{b_{\alpha}}^{\alpha}$$
since, on $\mathbb{B}(\alpha)_{b}^{\alpha}$, $f$ is given by $A_{b_{\alpha},b}^{\alpha} f_{b_{\alpha}} A_{b,b_{\alpha}}^{\alpha}$. Thus $f$ amounts to a choice of element in $M_{b_{\alpha}}^{\alpha}$. Likewise, since $\mathcal{C}(M) \cong \bigsqcup_{\alpha \in \Phi_{+}}\bigsqcup_{M^{\alpha}_{b_{\alpha}} \setminus \{0\}} B(\alpha)$, a morphism of crystals $B(\alpha) \rightarrow \mathcal{C}(M)$ is either $0$ or corresponds to an element of $M^{\alpha}_{b_{\alpha}} \setminus \{0\}$. This correspondence gives our natural isomorphism as required.
\end{proof}

\begin{remark}
As a comodule, $\mathbb{B} \cong \bigoplus_{\alpha} \bigoplus_{b' \in B(-\alpha)} \mathbb{B}(\alpha)$ via $b \otimes b' \mapsto (b)_{b'}$ in the copy of $\mathbb{B}(\alpha)$ indexed by $b' \in B(-\alpha)$. Under this isomorphism, multiplication becomes $(b)_{b'} \cdot (d)_{d'} = (b \cdot d)_{d' \cdot b'}$ whenever this is well defined, and $0$ otherwise, and comultiplication becomes $(b)_{b'} \mapsto \sum_{b'' \in B(\alpha)} (b)_{b''^{\vee}} \otimes (b'')_{b'}$.
\end{remark}

\begin{defn}
A \emph{based $\mathbb{B}$-comodule} is a pair $(M,X)$ such that
\begin{itemize}
\item[i)]$M$ is in $\mathbb{B}\text{-comod}^{\text{free}}$ and $X$ is a free basis of $M$;
\item[ii)]$X = \bigsqcup_{\alpha \in \Phi_{+}} \bigsqcup_{b \in B(\alpha)} X_{b}^{\alpha}$ where $X_{b}^{\alpha} = X \cap M_{b}^{\alpha}$; and
\item[iii)]each $A_{b,b'}^{\alpha}$ restricts to a bijection between the sets $X^{\alpha}_{b} \rightarrow X^{\alpha}_{b'}$.
\end{itemize}
A morphism of based comodules $(M,X) \rightarrow (N,Y)$ is a morphism of comodules $f:M \rightarrow N$ such that $f(X) \subset Y \sqcup \{0\}$. This forms a category which we denote $\mathbb{B}\text{-comod}^{\text{based}}$. The direct sum of two based comodules is $(M,X) \oplus (N,Y) = ({M \oplus N}, X \sqcup Y)$ and their tensor product is $(M,X) \otimes (N,Y) = (M \otimes N, X \otimes Y = \{x \otimes y \mid x \in X, y \in Y \})$.
\end{defn}

\begin{remark}
The data of a basis $X$ in the above definition is equivalent to having chosen a basis $X^{\alpha}_{b_{\alpha}}$ for each $M^{\alpha}_{b_{\alpha}}$ for $\alpha \in \Phi_{+}$.
\end{remark}

\begin{theorem}
\label{CrystalsareBasedComodules}
The functor
$$\mathit{Crys}_{\mathfrak{g}} \rightarrow \mathbb{B}\text{-comod}^{\text{based}}, \quad B(\alpha) \mapsto (\mathbb{B}(\alpha),B(\alpha)),$$
is an equivalence of categories.
\end{theorem}

\begin{proof}
It is clear that $X \mapsto (\mathbb{Z}X,X)$ is functorial. We shall construct a quasi-inverse as follows. Given a based comodule $(M,X)$, the pointed set $X \sqcup \{0\}$ is a subset of $\mathcal{C}(M)$ invariant under the Kashiwara operators, hence naturally forms a subcrystal. Given a morphism of based comodules $(M,X) \rightarrow (M',X')$ the restriction to $X$ maps to $X'$ and commutes with the operators $A_{b,b'}^{\alpha}$, hence with the Kashiwara operators, so gives a morphism of crystals. By the proof of Theorem \ref{B-Comodules are free on crystals}, the composition
$$\mathbb{B}\text{-comod}^{\text{based}} \rightarrow \mathit{Crys}_{\mathfrak{g}} \rightarrow \mathbb{B}\text{-comod}^{\text{based}}$$
is naturally isomorphic to the identity. Likewise, the composition
$$\mathit{Crys}_{\mathfrak{g}} \rightarrow \mathbb{B}\text{-comod}^{\text{based}} \rightarrow \mathit{Crys}_{\mathfrak{g}}$$ is naturally isomorphic to the identity. Hence we have an equivalence.
\end{proof}

\begin{prop}
The functor in Theorem \ref{CrystalsareBasedComodules} gives an equivalence of monoidal categories.
\end{prop}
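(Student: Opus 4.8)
The plan is to recall that an equivalence of monoidal categories is exactly a strong monoidal functor whose underlying functor is an equivalence of categories. Since the preceding theorem already supplies the equivalence of the underlying categories, it remains to equip the functor $\mathcal{L}: X \mapsto (\mathbb{Z}X, X)$ with a strong monoidal structure and to check the coherence axioms. The candidate structure maps are essentially forced. The tensor product of based comodules was defined so that $(\mathbb{Z}X, X) \otimes (\mathbb{Z}Y, Y) = (\mathbb{Z}X \otimes \mathbb{Z}Y,\, X \otimes Y)$, while $\mathcal{L}(X \otimes Y) = (\mathbb{Z}(X \otimes Y),\, X \otimes Y)$; here the two bases are literally the same pointed set $\{x \otimes y \mid x \in X,\, y \in Y\}$, and the two underlying free abelian groups $\mathbb{Z}X \otimes \mathbb{Z}Y$ and $\mathbb{Z}(X \otimes Y)$ are canonically identified by $x \otimes y \mapsto x \otimes y$. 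I would therefore take $\chi_{X,Y}$ to be this canonical isomorphism, which is the identity on the common basis, and the unit constraint to be the evident identification $\mathcal{L}(B(0)) = (\mathbb{Z}, \{u_0\})$ with the monoidal unit of based comodules.

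The only genuine content is that $\chi_{X,Y}$ is a morphism of $\mathbb{B}$-comodules, i.e. that the two comodule structures on this common free abelian group coincide: the one obtained by applying $\mathcal{L}$ to the tensor-product crystal $X \otimes Y$ (via its decomposition into irreducibles), and the one obtained by tensoring the comodules $\mathbb{Z}X$ and $\mathbb{Z}Y$ over the bialgebra $\mathbb{B}$. As all functors involved are additive and distribute over disjoint unions, I would reduce to $X = B(\alpha)$ and $Y = B(\beta)$ irreducible and compute both coactions on a basis element $b \otimes d$. The tensor-product coaction is $(\mu \otimes \text{id}) \circ (\text{id} \otimes \tau \otimes \text{id}) \circ (\Delta \otimes \Delta)$; unwinding it and using $\mu\big((b \otimes b'^\vee) \otimes (d \otimes d'^\vee)\big) = (b \cdot d) \otimes (d'^\vee \cdot b'^\vee)$ together with the identity $d'^\vee \cdot b'^\vee = (b' \cdot d')^\vee$ relating the decompositions of $B(\alpha) \otimes B(\beta)$ and its dual, one finds
\[
b \otimes d \;\longmapsto\; \sum_{\substack{b' \in B(\alpha),\, d' \in B(\beta) \\ b' \cdot d' \in B(\gamma)}} \big( (b \cdot d) \otimes (b' \cdot d')^\vee \big) \otimes (b' \otimes d'),
\]
where $B(\gamma)$ is the irreducible component containing $b \cdot d$. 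This is precisely the coaction that $\mathcal{L}$ assigns to $b \cdot d \in B(\gamma) \subset X \otimes Y$, by the definition of the comodule structure on each $\mathbb{B}(\gamma)$, once one identifies each $c' \in B(\gamma)$ with the element $b' \otimes d'$ of the corresponding copy for which $b' \cdot d' = c'$. Hence the two coactions agree, and $\chi_{X,Y}$ is an isomorphism of comodules carrying basis to basis, so it is a morphism of based comodules.

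Finally, naturality of $\chi_{X,Y}$ in $X$ and $Y$ and the associativity and unit coherence axioms follow formally: every structure isomorphism is the canonical identification that is the identity on underlying bases, and the associators and unitors on both the crystal side and the comodule side are inherited from the standard ones on free abelian groups, so the coherence diagrams commute on basis elements and hence everywhere. The main obstacle is the coaction computation of the middle paragraph; but since the multiplication $\mu$ on $\mathbb{B}$ was constructed precisely to encode the decomposition of tensor products of crystals, this computation merely reprises the bialgebra verification for $\mathbb{B}$ and presents no real difficulty.
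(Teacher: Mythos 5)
Your proposal is correct and follows essentially the same route as the paper: both reduce to irreducible crystals $B(\alpha)$, $B(\beta)$, unwind the tensor-product coaction on $b \otimes d$ using $\mu$ and the identity $d'^{\vee}\cdot b'^{\vee} = (b'\cdot d')^{\vee}$, and identify the resulting sum $\sum_{c \in B(\gamma)}((b\cdot d)\otimes c^{\vee})\otimes c$ with the coaction on $\mathbb{B}(\gamma)$ under the decomposition into irreducibles. The extra scaffolding you provide (the explicit strong monoidal structure maps and coherence check) is left implicit in the paper but is consistent with it.
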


\begin{proof}
For $\alpha, \beta \in \Phi_+$, the comodule structure of $\mathbb{B}(\alpha) \otimes \mathbb{B}(\beta)$ is
$$b \otimes d \mapsto \sum (b \cdot d) \otimes (d'^{\vee} \cdot b'^{\vee}) \otimes (b' \otimes d')= \sum (b \cdot d) \otimes (b' \cdot d')^{\vee} \otimes (b' \otimes d')$$
where both summations are taken over all $b' \in B(\alpha)$ and $d' \in B(\beta)$ such that $b\otimes d$ and $(d'^{\vee} \otimes b'^{\vee})^{\vee}=b' \otimes d'$ lie in the same connected component, $B(\gamma)$ say. Since all terms of this connected component appear uniquely as some product $b' \cdot d'$, we can then rewrite this as $\sum_{c \in B(\gamma)} (b \cdot d) \otimes c^{\vee} \otimes c$. This is the same comultiplication of $b \otimes d$ as when viewed as an element of $\mathbb{Z}(B(\alpha) \otimes B(\beta))$ under its decomposition into irreducible components. Our result then follows.
\end{proof}

\subsection{Relation to the crystal functor}
Recall from Corollary \ref{CrystalClassificationAsCoalgebras} that $\mathit{Crys}_{\mathfrak{g}}$ is equivalent to the category of coalgebras of the comonad
$$U: \mathit{Set}_{\bullet} \rightarrow \mathit{Set}_{\bullet}, \, X \mapsto \bigsqcup_{\alpha \in \Phi_{+}} \bigsqcup_{\substack{f:FB(\alpha) \rightarrow X \\ f \neq 0}} FB(\alpha).$$

\begin{defn}
For pointed sets $A,B$, we define $\underline{\text{Hom}}_{\mathit{Set}_{\bullet}}(A,B)$ to be pointed set
$$\underline{\text{Hom}}_{\mathit{Set}_{\bullet}}(A,B) = \{f:A \rightarrow B \mid f \neq 0 \} \sqcup \{0:A \rightarrow B\}.$$
\end{defn}

\begin{prop}
The comonad $U$ is isomorphic to
$$U':X \mapsto \bigsqcup_{\alpha \in \Phi_{+}} FB(\alpha) \otimes \underline{\text{Hom}}_{\mathit{Set}_{\bullet}}(FB(\alpha),A).$$
Under this identification, the comultiplication on $U'$, $\Delta:U' \Rightarrow U'U'$, becomes $b \otimes f \mapsto b \otimes f^{\sim}$ where $f^{\sim}(b')=b' \otimes f \in U'(A)$.
\end{prop}

\begin{proof}
The isomorphism is given my the maps
$$FB(\alpha) \otimes \underline{\text{Hom}}_{\mathit{Set}_{\bullet}}(FB(\alpha),A) \rightarrow \bigsqcup_{\substack{f:FB(\alpha) \rightarrow X \\ f \neq 0}} FB(\alpha)$$
taking $b \otimes f$ to $b$ in the copy of $FB(\alpha)$ indexed by $f$.
\end{proof}

\begin{prop}
\label{RewritingTensoringWithB}
The comonad $\mathbb{B} \otimes -$ is isomorphic to
$$A \mapsto \bigoplus_{\alpha \in \Phi_{+}} \mathbb{B}(\alpha) \otimes \underline{\text{Hom}}_{\mathbb{Z}}(\mathbb{B}(\alpha),A).$$
Under this identification, the comultiplication on $\mathbb{B}$ becomes $b \otimes f \mapsto b \otimes f^{\sim}$ where again $f^{\sim}(b')=b' \otimes f$.
\end{prop}

\begin{proof}
For a free abelian group $A$, we have
$$\mathbb{B} \otimes A = \bigoplus_{\alpha} \mathbb{B}(\alpha) \otimes \mathbb{B}(\alpha)^{\vee} \otimes A \cong \bigoplus_{\alpha} \mathbb{B}(\alpha) \otimes \underline{\text{Hom}}_{\mathbb{Z}}(\mathbb{B}(\alpha),A)$$
given by $b \otimes b' \otimes a \mapsto b \otimes [x \mapsto \epsilon(x \otimes b') a]$.
\end{proof}

\begin{remark}
Note that the functor $U'$ clearly does not preserve coproducts, whilst the functor in Proposition \ref{RewritingTensoringWithB} does. As a result, the latter is isomorphic to tensoring with a coalgebra whilst the former is not.
\end{remark}

\subsection{The dual bialgebra}

\begin{defn}
Let
$$\mathbb{B}^{\ast}:=\underline{\text{Hom}}(\mathbb{B},\mathbb{Z}) \cong \prod\nolimits_{\alpha \in \Phi_+} \mathbb{B}(\alpha)^{\ast} \otimes \mathbb{B}(-\alpha)^{\ast}$$
be the dual of $\mathbb{B}$. Let $\{\phi_{b,b'} \mid b \in B(\alpha), b' \in B(-\alpha)\} \subset \mathbb{B}(\alpha)^{\ast} \otimes \mathbb{B}(-\alpha)^{\ast}$ denote the dual $\mathbb{Z}$-basis to $B(\alpha) \otimes B(-\alpha)$, $\phi_{b,b'}(d \otimes d')=\delta_{b,d}\delta_{b',d'}$ for $d \in B(\alpha)$, $d' \in B(-\alpha)$. We shall denote elements of this dual by formal sums $\sum_{b,b'} a_{b,b'} \phi_{b,b'}$ ranging over all $b  \in B(\alpha)$, $b' \in B(-\alpha)$ and $\alpha \in \Phi_+$.
\end{defn}

\begin{lem}
\label{DualBialgebra}
The coalgebra structure on $\mathbb{B}$ induces an algebra structure on the dual $\mathbb{B}^{\ast}$ given by
$$\left( \sum_{b,b'} a_{b,b'} \phi_{b,b'} \right) \cdot \left( \sum_{b,b'} a'_{b,b'} \phi_{b,b'} \right) = \sum_{\substack{\alpha \in \Phi_{+} \\ b \in B(\alpha) \\ b' \in B(-\alpha)}} \left( \sum_{d \in B(\alpha)} a_{b,d^{\vee}} a'_{d,b'} \right) \phi_{b,b'},$$
$$1:= \sum_{\substack{\alpha \in \Phi_{+} \\ b \in B(\alpha)}} \widehat{b \otimes b^{\vee}}.$$
Each $\mathbb{B}(\alpha)$ is a $\mathbb{B}^{\ast}$-module where
$$\phi_{d, d'^{\vee}} \cdot b = \sum_{c \in B(\alpha)} \phi_{d, d'^{\vee}} (b \otimes c^{\vee}) \ c = \delta_{d,b} d'$$
for $b \in B(\alpha)$. Furthermore, the algebra structure on $\mathbb{B}$ induces algebra homomorphisms
$$\Delta:\mathbb{B}^{\ast} \rightarrow (\mathbb{B} \otimes \mathbb{B})^{\ast} \cong \prod\nolimits_{\alpha, \beta \in \Phi_+} \mathbb{B}(\alpha)^{\ast} \otimes \mathbb{B}(-\alpha)^{\ast}\otimes \mathbb{B}(\beta)^{\ast} \otimes \mathbb{B}(-\beta)^{\ast}$$
and $\epsilon:\mathbb{B}^{\ast} \rightarrow \mathbb{Z}$ given by
$$\Delta \left( \sum_{b,b'} a_{b,b'} \phi_{b,b'} \right) = \sum_{\substack{\alpha \in \Phi_{+} \\ b \in B(\alpha) \\ b' \in B(-\alpha)}}\sum_{\substack{\beta \in \Phi_{+} \\ d \in B(\beta) \\ d' \in B(-\beta)}} a_{b\cdot d,d' \cdot b'} \ \phi_{b,b'} \otimes \phi_{d,d'},$$
$$\epsilon \left( \sum_{b,b'} a_{b,b'} \phi_{b,b'} \right) = a_{b_{0},b_{0}^{\vee}},$$
where $B(0)=\{b_0 \}$. Here, the algebra structure on $(\mathbb{B} \otimes \mathbb{B})^{\ast}$ is induced by the coalgebra structure on $\mathbb{B} \otimes \mathbb{B}$.
\end{lem}

\begin{proof}
This is a straightforward verification.
\end{proof}

\begin{defn}
Let $\mathscr{A}$ be a $\mathbb{Z}$-algebra and let $\mathbf{1}_{\mathscr{A}}$ be a subset of $\mathscr{A}$. We will say that $\mathbf{1}_{\mathscr{A}}$ forms a \emph{generalised unit} for $\mathscr{A}$ if, for each $a \in \mathscr{A}$, there is a finite subset $X \subset \mathbf{1}_{\mathscr{A}}$ such that
\begin{itemize}
\item[i)]$a=(\sum_{x \in X} x) \cdot a$; and
\item[ii)]$x \cdot a = 0$ if $x \in \mathbf{1}_{\mathscr{A}} \setminus X$.
\end{itemize}
We will say that an $\mathscr{A}$-module $M$ is \emph{unital} if, for any $m \in M$, there is a finite subset $X \subset \mathbf{1}_{\mathscr{A}}$ such that
\begin{itemize}
\item[i)]$m=(\sum_{x \in X} x) \cdot m$;
\item[ii)]$x \cdot m = 0$ if $x \in \mathbf{1}_{\mathscr{A}} \setminus X$; and additionally
\item[iii)]$x \cdot M$ is a free abelian group for all $x \in \mathbf{1}_{\mathscr{A}}$.
\end{itemize}
\end{defn}

\begin{defn}
Let $\dot{U}_{0}$ be the additive subgroup $\bigoplus_{\alpha \in \Phi_+} \mathbb{B}(\alpha)^{\ast} \otimes \mathbb{B}(-\alpha)^{\ast}$ of $\mathbb{B}^{\ast}$. Let $\mathbf{1}$ denote the collection $\{1_{\alpha} \mid \alpha \in \Phi_{+}\} \subset \dot{U}_{0}$ where $1_{\alpha} = \sum_{b \in B(\alpha)} \phi_{b,b^{\vee}}$.
\end{defn}

\begin{lem}
$\dot{U}_{0}$ is an ideal in $\mathbb{B}^{\ast}$, and hence both a non-unital subalgebra and a $\mathbb{B}^{\ast}$-bimodule. Furthermore, the collection $\mathbf{1}$ forms a generalised unit in $\dot{U}_{0}$.
\end{lem}

\begin{proof}
Given $\sum_{b,b'}a_{b,b'^{\vee}}\phi_{b,b'^{\vee}} \in \mathbb{B}^{\ast}$ and $\phi_{d,d'^{\vee}} \in \mathbb{B}(\alpha)^{\ast} \otimes \mathbb{B}(-\alpha)^{\ast}$ we have
$$\left(\sum_{b,b'}a_{b,b'^{\vee}}\phi_{b,b'^{\vee}}\right) \cdot \phi_{d,d'^{\vee}} = \sum_{b,b'} a_{b,b'^{\vee}}\delta_{b',d} \phi_{b,d'^{\vee}} = \sum_{b \in B(\alpha)} a_{b,d^{\vee}} \phi_{b,d'^{\vee}}$$
which is in $\dot{U}_{0}$. Likewise, $\phi_{d,d'^{\vee}} \cdot (\sum_{b,b'}a_{b,b'^{\vee}}\phi_{b,b'^{\vee}}) \in \dot{U}_{0}$. Furthermore, multiplication by $1_{\alpha}$ is the identity on $\mathbb{B}(\beta)^{\ast} \otimes \mathbb{B}(-\beta)^{\ast}$ when $\beta = \alpha$ and is $0$ otherwise. From this it follows that $\mathbf{1}$ is a generalised unit.
\end{proof}

\begin{remark}
We use the notation $\dot{U}_{0}$ to highlight the similarity with Lusztig's construction of $\dot{U}$ in \cite[p.~183]{L1}. In the following subsection we shall conjecture a more precise relationship between these two constructions.
\end{remark}

\begin{remark}
Note that $\Delta$ from Lemma \ref{DualBialgebra} does not restrict to a comultiplication on $\dot{U}_{0}$. For example, $\Delta(\phi_{b_{0}, b_{0}^{\vee}}) \in (\mathbb{B} \otimes \mathbb{B})^{\ast}$ takes the value $1$ on each $b \otimes b^{\vee} \otimes d \otimes d^{\vee}$ for $b$ a highest weight element and $d$ a lowest weight element in $B(\alpha)$, $\alpha \in \Phi_{+}$, of which there are infinitely many. This is because $b \otimes d$ corresponds to $b_{0}$ in a copy of $B(0)$ under the decomposition of $B(\alpha) \otimes B(\alpha)$ into irreducible components. However, we do have the following collection of maps.
\end{remark}

\begin{defn}
\label{UdotComultiplication}
For $\alpha, \beta, \gamma \in \Phi_{+}$ let
$$\Delta^{\alpha}_{\beta, \gamma}: \mathbb{B}(\alpha)^{\ast} \otimes \mathbb{B}(-\alpha)^{\ast} \rightarrow \mathbb{B}(\beta)^{\ast} \otimes \mathbb{B}(-\beta)^{\ast} \otimes \mathbb{B}(\gamma)^{\ast} \otimes \mathbb{B}(-\gamma)^{\ast},$$
$$\phi_{b, b'} \mapsto \sum_{\substack{\alpha \in \Phi_{+} \\ d \in B(\alpha) \\ d' \in B(-\alpha)}}\sum_{\substack{\beta \in \Phi_{+} \\ d'' \in B(\beta) \\ d''' \in B(-\beta)}} \delta_{b,d\cdot d''}\delta_{b',d''' \cdot d'} \ \phi_{d,d'} \otimes \phi_{d'',d'''}.$$
Let $\varepsilon$ be the restriction of the counit from Lemma \ref{DualBialgebra} to $\dot{U}_{0}$.
\end{defn}

\begin{remark}
The maps in Definition \ref{UdotComultiplication} can be considered as a single map $\dot{U}_{0} \rightarrow (\mathbb{B} \otimes \mathbb{B})^{\ast}$ that agrees with the restriction of the comultiplication in Lemma \ref{DualBialgebra}. They are therefore associative in the sense that
$$(\Delta^{\beta}_{\beta, \delta} \otimes \text{Id}) \circ \Delta^{\alpha}_{\beta, \gamma}= (\text{Id} \otimes \Delta^{\gamma}_{\delta, \gamma}) \circ \Delta^{\alpha}_{\beta, \gamma}$$
for all $\alpha,\beta,\gamma, \delta \in \Phi_{+}$.
\end{remark}

\begin{prop}
The maps $\Delta^{\alpha}_{\beta,\gamma}$ and $\varepsilon$ induce a monoidal structure on the category of unital $\dot{U}_{0}$-modules where $x \in \mathbb{B}(\alpha)^{\ast} \otimes \mathbb{B}(-\alpha)^{\ast}$ acts on $(\mathbf{1}_{\beta} \otimes \mathbf{1}_{\gamma})(M \otimes N)$ as $\Delta^{\alpha}_{\beta,\gamma}(x)$ for unital modules $M$ and $N$.
\end{prop}

\begin{proof}
We first note that $\mathbb{B}(\alpha)^{\ast} \otimes \mathbb{B}(-\alpha)^{\ast}$ are unital subalgebras of $\dot{U}_{0}$ with unit $1_{\alpha}$, and that $\Delta^{\alpha}_{\beta,\gamma}$ are algebra homomorphisms. Also, since $M = \bigoplus_{\alpha \in \Phi_{+}} \mathbf{1}_{\alpha} M$ and $N = \bigoplus_{\beta \in \Phi_{+}} \mathbf{1}_{\beta} N$, we obtain a well defined action of $\dot{U}_{0}$ on $M \otimes N$. The associativity constraint observed in the previous remark ensures that the monoidal structure in associative. Furthermore, the fact that the compositions
$$\mathbb{B}(\alpha)^{\ast} \otimes \mathbb{B}(-\alpha)^{\ast} \xrightarrow{\Delta^{\alpha}_{\beta,\gamma}} \mathbb{B}(\beta)^{\ast} \otimes \mathbb{B}(-\beta)^{\ast} \otimes \mathbb{B}(\gamma)^{\ast} \otimes \mathbb{B}(-\gamma)^{\ast} \xrightarrow{\text{Id} \otimes \varepsilon} \mathbb{B}(\beta)^{\ast} \otimes \mathbb{B}(-\beta)^{\ast}$$
and
$$\mathbb{B}(\alpha)^{\ast} \otimes \mathbb{B}(-\alpha)^{\ast} \xrightarrow{\Delta^{\alpha}_{\beta,\gamma}} \mathbb{B}(\beta)^{\ast} \otimes \mathbb{B}(-\beta)^{\ast} \otimes \mathbb{B}(\gamma)^{\ast} \otimes \mathbb{B}(-\gamma)^{\ast} \xrightarrow{\varepsilon \otimes \text{Id}} \mathbb{B}(\gamma)^{\ast} \otimes \mathbb{B}(-\gamma)^{\ast}$$
are the identity when $\alpha=\beta$ or $\alpha = \gamma$ respectively or $0$ otherwise ensures that $\mathbb{Z}$ with the action of $\dot{U}_{0}$ given by $\varepsilon$ is a monoidal unit.
\end{proof}

\begin{defn}
A \emph{based $\dot{U}_{0}$-module} is a pair $(M,X)$ such that
\begin{itemize}
\item[i)]$M$ is a $\dot{U}_{0}$-module and $X$ is a free $\mathbb{Z}$-basis of $M$;
\item[ii)]$X = \bigsqcup_{\alpha \in \Phi_{+}} \bigsqcup_{b \in B(\alpha)} X_{b}^{\alpha}$ where $X_{b}^{\alpha} := X \cap \phi_{b,b^{\vee}} \cdot M$ for $b \in B(\alpha)$; and
\item[iii)]the action of each $\phi_{b,b'^{\vee}}$ restricts to a bijection between the sets $X^{\alpha}_{b} \rightarrow X^{\alpha}_{b'}$.
\end{itemize}
A morphism of based comodules module homomorphism that preserves the free basis. The tensor product of two based modules is $(M,X) \otimes (N,Y) = (M \otimes N, X \otimes Y = \{x \otimes y \mid x \in X, y \in Y \})$.
\end{defn}

\begin{prop}
\label{UdotModulesEquivalentToCrystals}
The monoidal category of based unital $\dot{U}_{0}$-modules is equivalent to $\mathit{Crys}_{\mathfrak{g}}$.
\end{prop}

\begin{proof}
By Theorem \ref{CrystalsareBasedComodules}, it is enough to show that based unital $\dot{U}_{0}$-modules are equivalent to based $\mathbb{B}$-comodules.\\

Given a based unital $\dot{U}_{0}$-module $(M,X)$, the maps
$$M \rightarrow \mathbb{B}(\alpha) \otimes \mathbb{B}(-\alpha) \otimes M, \quad m \mapsto \sum_{b,b' \in B(\alpha)} b \otimes b'^{\vee} \otimes (\phi_{b,b'^{\vee}}m),$$
are non-zero for only finitely many $\alpha \in \Phi_{+}$. Summing these gives a coaction of $\mathbb{B}$ on $M$. By construction this makes $(M,X)$ a based $\mathbb{B}$-comodule.\\

Conversely, given a based $\mathbb{B}$-comodule $(M,X)$ we obtain a $\dot{U}_{0}$-module structure via the composition
$$\dot{U}_{0} \otimes M \xrightarrow{\Delta_{M}} \dot{U}_{0} \otimes \mathbb{B} \otimes M \xrightarrow{\langle-,-\rangle \otimes \text{Id}} \mathbb{Z} \otimes M \cong M.$$
For each $m \in M$, the coaction $\Delta_{M}(m)$ is a finite sum in the free basis $\mathcal{B} \otimes X$. Hence there is a finite subset $A \subset \Phi_{+}$ such that the non-zero coefficients are for basis elements in $(\bigsqcup_{\alpha \in A} B(\alpha) \otimes B(-\alpha)) \otimes X \subset \mathcal{B} \otimes X$. Thus $(\sum_{\alpha \in A} 1_{\alpha})m=m$ under this action of $\dot{U}_{0}$ and $1_{\beta} \cdot m = 0$ for $\beta \not\in A$. So $(M,X)$ becomes a based unital $\dot{U}_{0}$-modules.\\

These correspondences give mutually inverse functors from which our stated equivalence arises. This equivalence is monoidal since the monoidal structure on $\dot{U}_{0}$-modules is induced by the comultiplication maps $\Delta^{\alpha}_{\beta,\gamma}$ which collectively are dual to the multiplication on $\mathbb{B}$ which induces the monoidal structure on $\mathbb{B}$-comodules.
\end{proof}

\begin{defn}
For $i \in I$ and $\alpha \in \Phi_{+}$, let
$$\begin{array}{rclrcl}
\tilde{e}_{\alpha, i} &=& \sum_{\substack{b \in B(\alpha)\\ \tilde{e}_{i}b \neq 0}} \phi_{\tilde{e}_{i}b, b^{\vee}}, & \tilde{f}_{\alpha, i} &=& \sum_{\substack{b \in B(\alpha)\\ \tilde{f}_{i}b \neq 0}} \phi_{\tilde{f}_{i}b, b^{\vee}},\\
\end{array}$$
in $\dot{U}_{0}$.
\end{defn}

\begin{lem}
For all $i \in I$ and $\alpha \in \Phi_{+}$, $\tilde{e}_{\alpha, i}$ and $\tilde{f}_{\alpha, i}$ act as $\tilde{e}_{i}$ and $\tilde{f}_{i}$ on $\mathbb{B}(\alpha)$, and by zero on $\mathbb{B}(\beta)$ for $\beta \neq \alpha$.
\end{lem}

\begin{proof}
Recall that for $b_{0} \in B(\beta)$, $\tilde{e}_{\alpha, i}$ acts as
$$\sum_{\substack{b \in B(\alpha)\\ \tilde{e}_{i}b \neq 0}} \phi_{\tilde{e}_{i}b_{0}, b^{\vee}} \cdot b  = \sum_{\substack{b \in B(\alpha)\\ \tilde{e}_{i}b \neq 0}} \delta_{\tilde{e}_{i}b_{0},b} b$$
which is $\tilde{e}_{i}b_{0}$ if $\alpha=\beta$ and $0$ otherwise. Likewise for the action of $\tilde{f}_{\alpha, i}$.
\end{proof}

\begin{prop}
$\dot{U}_{0}$ is generated as an algebra by
$$\{\tilde{e}_{\alpha, i}, \tilde{f}_{\alpha, i} \mid i \in I, \alpha \in \Phi_{+}\}$$
along with the generalised unit elements $\mathbf{1}=\{1_{\alpha} \mid \alpha \in \Phi_{+}\}$.
\end{prop}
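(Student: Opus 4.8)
The plan is to recognise $\dot{U}_{0}$ as a direct sum of matrix algebras (with finitely supported entries) and then to exhibit every matrix unit as a word in the proposed generators. First I would record the multiplication law: under the identification $\widehat{b \otimes c^{\vee}} = E_{b,c}$ for $b,c \in B(\alpha)$, the product rule of $\mathbb{B}^{\vee}$ restricted to $\dot{U}_{0}$ reads $\widehat{b \otimes c^{\vee}} \cdot \widehat{d \otimes e^{\vee}} = \delta_{c,d}\,\widehat{b \otimes e^{\vee}}$, so the $\widehat{b \otimes c^{\vee}}$ behave exactly as matrix units, $1_{\alpha} = \sum_{b} E_{b,b}$ is the identity of the $\alpha$-block, and the claimed generators become the matrices of the Kashiwara operators, $\tilde{e}_{\alpha,i} = \sum_{b} E_{\tilde{e}_{i}b,\,b}$ and $\tilde{f}_{\alpha,i} = \sum_{b} E_{\tilde{f}_{i}b,\,b}$. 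Since the $\widehat{b \otimes c^{\vee}}$ span $\dot{U}_{0}$, it suffices to produce each of them from the generators.

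Next I would isolate the diagonal idempotents. A direct computation gives $\tilde{f}_{\alpha,i}\tilde{e}_{\alpha,i} = \sum_{b:\,\tilde{e}_{i}b \neq 0} E_{b,b}$, so $1_{\alpha} - \tilde{f}_{\alpha,i}\tilde{e}_{\alpha,i}$ is the diagonal projection onto the $i$-highest weight vertices. As $I$ is finite and these diagonal elements commute, the product $P_{0} := \prod_{i \in I}\left(1_{\alpha} - \tilde{f}_{\alpha,i}\tilde{e}_{\alpha,i}\right)$ projects onto $\{b : \tilde{e}_{i}b = 0 \text{ for all } i\}$, which for the highest weight crystal $B(\alpha)$ is the singleton $\{u_{\alpha}\}$; hence $P_{0} = E_{u_{\alpha},u_{\alpha}}$. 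Using the crystal axiom $b_{2} = \tilde{f}_{i}b_{1} \Leftrightarrow b_{1} = \tilde{e}_{i}b_{2}$, one checks $\tilde{f}_{\alpha,i}\,E_{b,b}\,\tilde{e}_{\alpha,i} = E_{\tilde{f}_{i}b,\,\tilde{f}_{i}b}$ whenever $\tilde{f}_{i}b \neq 0$; feeding $P_{0}$ into this along any lowering path $u_{\alpha} \to c$ then yields $E_{c,c}$ for every $c \in B(\alpha)$ by induction.

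Finally I would build the off-diagonal units. Given $b,c$ in the same $B(\alpha)$, connectedness of $B(\alpha)$ provides a word $w$ in the $\tilde{e}_{i},\tilde{f}_{i}$ (with nonzero intermediate steps) carrying $c$ to $b$; the corresponding product $X$ of the $\tilde{e}_{\alpha,i},\tilde{f}_{\alpha,i}$ equals $\sum_{d} E_{wd,\,d}$, and sandwiching gives $E_{b,b}\,X\,E_{c,c} = E_{b,c}$, cleanly killing all other terms. This exhibits every matrix unit, hence (taking finite $\mathbb{Z}$-linear combinations) every element of $\dot{U}_{0}$, inside the algebra generated by the $\tilde{e}_{\alpha,i},\tilde{f}_{\alpha,i}$ and the $1_{\alpha}$.

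I expect the main obstacle to be the isolation step: a naive product of the generators is a sum of matrix units ranging over the whole crystal, so the crux is to manufacture the rank-one projections, and this hinges on two structural facts — the finiteness of $I$ and the uniqueness of the highest weight element of $B(\alpha)$ — together with the careful bookkeeping (via the crystal bijection axiom) that ensures the sandwich $E_{b,b}\,X\,E_{c,c}$ retains exactly one term. A secondary point worth noting is that, even when $B(\alpha)$ is infinite, the specific products constructed above are finitely supported and so remain in $\dot{U}_{0}$, consistent with reading the $1_{\alpha}$ as a generalised unit in the sense of Lusztig; the remaining verification of the matrix-unit multiplication law itself is routine.
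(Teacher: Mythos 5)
Your proposal is correct and follows essentially the same route as the paper: the crux in both is that $\prod_{i\in I}\bigl(1_{\alpha}-\tilde{f}_{\alpha,i}\tilde{e}_{\alpha,i}\bigr)$ isolates the rank-one projector $\widehat{b_{\alpha}\otimes b_{\alpha}^{\vee}}$, after which every $\widehat{b\otimes b'^{\vee}}$ is obtained by multiplying with words in the $\tilde{f}_{\alpha,i}$ and $\tilde{e}_{\alpha,i}$. The only (cosmetic) difference is that the paper produces $\widehat{b\otimes b'^{\vee}}$ directly as $\tilde{f}_{\alpha,i_{1}}\cdots\tilde{f}_{\alpha,i_{n}}\bigl(\prod_{i}(1_{\alpha}-\tilde{f}_{\alpha,i}\tilde{e}_{\alpha,i})\bigr)\tilde{e}_{\alpha,j_{1}}\cdots\tilde{e}_{\alpha,j_{m}}$, whereas you first manufacture all diagonal idempotents and then sandwich a transport word between them.
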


\begin{proof}
Fix $\alpha \in \Phi_{+}$. For $i \in I$, $1_{\alpha}-\tilde{f}_{\alpha,i}\tilde{e}_{\alpha,i}$ is the sum of $\phi_{b, b^{\vee}}$ such that $\tilde{e}_{i}b=0$. So, for any ordering of $I$, $\prod_{i \in I} (1_{\alpha}-\tilde{f}_{\alpha,i}\tilde{e}_{\alpha,i})$ is the sum of $\phi_{b, b^{\vee}}$ where $\tilde{e}_{i}b=0$ for all $i \in I$. That is,
$$\phi_{b_{\alpha}, b_{\alpha}^{\vee}} = \prod_{i \in I} (1_{\alpha}-\tilde{f}_{\alpha,i}\tilde{e}_{\alpha,i}).$$
The result then follows from the fact that if $b=\tilde{f}_{i_{1}}\tilde{f}_{i_{2}}..\tilde{f}_{i_{n}}b_{\alpha}$ and $b'=\tilde{f}_{j_{1}}\tilde{f}_{j_{2}}..\tilde{f}_{j_{m}}b_{\alpha}$ then
$$
\begin{array}{rcl}
\phi_{b, b'^{\vee}} &=& \tilde{f}_{\alpha,i_{1}}\tilde{f}_{\alpha,i_{2}}..\tilde{f}_{\alpha,i_{n}}(\phi_{b_{\alpha}, b_{\alpha}^{\vee}})\tilde{e}_{\alpha,j_{1}}\tilde{e}_{\alpha,j_{2}}..\tilde{e}_{\alpha,j_{m}}\\
&=& \tilde{f}_{\alpha,i_{1}}..\tilde{f}_{\alpha,i_{n}}\left(\prod_{i \in I} (1_{\alpha}-\tilde{f}_{\alpha,i}\tilde{e}_{\alpha,i})\right)\tilde{e}_{\alpha,j_{1}}..\tilde{e}_{\alpha,j_{m}}.
\end{array}
$$
\end{proof}

\subsection{Relation to global bases}

Kashiwara shows in \cite{K6} that crystal bases $B(\alpha)$ of representations $V(\alpha)$ induce global bases of the vector spaces $V(\alpha)$. Using these bases, we see that $\mathcal{B}$ gives rise to a global base of $A_{q}(\mathfrak{g})$.
\\

Recall from Proposition \ref{BPresentationForsl2} that, in the case of $\mathfrak{sl}_{2}$, the bialgebra $\mathbb{B}$ is generated by $a,b,c,d$ which satisfy the relations
$$cb=bc=db=dc=ba=ca = 0, \, \, \, da=1.,$$
This forms a bialgebra via the comultiplication
$$\begin{array}{cc}
\Delta(a)=a \otimes a + b \otimes c, & \Delta(b)=a \otimes b + b \otimes d,\\
\Delta(c)= c \otimes a + d \otimes c, & \Delta(d)=c \otimes b + d \otimes d.
\end{array}$$
Similarly, the quantum coordiante ring $A_{q}(\mathfrak{sl}_{2})$ can be realised as a quotient of the fee algebra $k\langle a,b,c,d \rangle$ by the relations
$$\begin{array}{ccc}
cb=bc=qad-q1, & db=qbd, & dc=qcd,\\
ba=qab, & ca=q  ac, & da=qcb+1,
\end{array}$$
again viewed as a bialgebra with an analogous comultiplication to the above. Kashiwara shows in \cite{K5} that
$$\{a^{i}c^{j}d^{k} \mid i,j,k \geq 0\} \cup \{a^{i}b^{j}d^{k} \mid i,j,k \geq 0, j \neq 0\} \subset A_{q}(\mathfrak{sl}_{2})$$
is the global basis of $A_{q}(\mathfrak{sl}_{2})$ corresponding to the cystal base $\mathcal{B}$, where $a^{i}c^{j}d^{k} = u_{i+j}^{(i+j+k)} \otimes (u_{i}^{(j+k)})^{\vee}$ and $a^{i}b^{j}d^{k} = u_{i}^{(j+k)} \otimes (x_{i+j}^{(k)})^{\vee}$ in $\mathcal{B}$. If we assume $k=K(q)$ for some field $K$, so $q$ is a formal parameter, it is then apparent that the multiplication in $\mathbb{B}$ on basis elements is the result of taking the corresponding global basis elements in $A_{q}(\mathfrak{sl}_{2})$, writing their product again in terms of global basis elements and then taking only the $q^{0}$ coefficient (that is, evaluating at $q=0$). A similar process can be formulated for the comultiplication.
\\

It is a goal of future work by the author to investigate whether this phenomenon is exclusive to $\mathfrak{sl}_{2}$. In \cite{L3}, Lusztig uses a similar process of multiplying global basis elements (or \emph{canonical basis} elements in his terminology) of a modified version of $U_{q}(\mathfrak{g})$, denoted $\dot{U}$, to construct a bialgebra. He refers to his construction as a quantum group at $v=\infty$, or $q=v^{-1}=0$ in our notation. Since $\dot{U}$ is dual to $A_{q}(\mathfrak{g})$, this bialgebra at $q=0$ should be dual to $\mathbb{B}$. We conjecture that it is $\dot{U}_{0}$. This should give some way of describing the (co)multiplication of $\mathbb{B}$ in terms of the (co)multiplication of global basis elements of $A_{q}(\mathfrak{g})$.

\end{document}